\definecolor{shadecolor}{gray}{0.95}
\declaretheoremstyle[
headfont=\normalfont\bfseries,
notefont=\mdseries, notebraces={(}{)},
bodyfont=\normalfont,
postheadspace=0.5em,
spaceabove=1pt,
mdframed={
  skipabove=8pt,
  skipbelow=8pt,
  hidealllines=true,
  backgroundcolor={shadecolor},
  innerleftmargin=4pt,
  innerrightmargin=4pt}
]{shaded}
\newcommand{\bb}{\mathbf{b}}
\newcommand{\bx}{\mathbf{x}}
\newcommand{\bbstar}{\mathbf{b}^\star}
\newcommand{\bs}{\mathbf{s}}
\newcommand{\bv}{\mathbf{v}}
\newcommand{\I}{\mathbf{I}}
\newcommand{\Fix}{\mathsf{Fix}}
\newcommand{\bB}{\mathbf{B}}
\newcommand{\bA}{\mathbf{A}}
\newcommand{\ba}{\mathbf{a}}
\newcommand{\bu}{\mathbf{u}}
\newcommand{\bc}{\mathbf{c}}
\newcommand{\bP}{\mathbf{P}}
\newcommand{\R}{\mathbb{R}}
\newcommand{\N}{\mathbb{N}}
\newcommand{\cI}{\mathcal{I}}
\newcommand{\cR}{\mathcal{R}}
\newcommand{\cS}{\mathcal{S}}
\newcommand{\cO}{\mathcal{O}}
\newcommand{\cT}{\mathcal{T}}
\newcommand{\cA}{\mathcal{A}}
\newcommand{\cF}{\mathcal{F}}
\newcommand{\cG}{\mathcal{G}}
\newcommand{\cJ}{\mathcal{J}}
\newcommand{\cL}{\mathcal{L}}
\newcommand{\cH}{\mathcal{H}}
\newcommand{\cM}{\mathcal{M}}
\newcommand{\cQ}{\mathcal{Q}}
\newcommand{\cK}{\mathcal{K}}
\newcommand{\cTtilde}{\tilde{\mathcal{T}}}
\newcommand{\bbtilde}{\tilde{\mathbf{b}}}
\newcommand{\zer}{\mathsf{zer}}
\newcommand{\dist}{\mathsf{dist}}
\newcommand{\be}{\begin{equation}}
\newcommand{\ee}{\end{equation}}
\DeclareMathOperator{\prox}{prox}       % proximal operator      
\DeclareMathOperator{\Arg}{Arg}         % Argument
\declaretheorem[style=shaded,within=section]{definition}
\declaretheorem[style=shaded,sibling=definition]{theorem}
\declaretheorem[style=shaded,sibling=definition]{proposition}
\declaretheorem[style=shaded,sibling=definition]{corollary}
\declaretheorem[style=shaded,sibling=definition]{lemma}
\theoremstyle{remark}
\newtheorem{example}{Example} %[section]
\newtheorem{remark}{Remark} %[section]
\title{On the  metric resolvent: nonexpansiveness, convergence rates and  applications\thanks{This work was  funded by the National Natural Science Foundation of China under grant no. 62071028. }}
\author{Feng Xue\footnote{National Key Laboratory of Science and Technology on Test Physics and Numerical Mathematics, Beijing, 100076, China (E-mail: \url{fxue@link.cuhk.edu.hk})}}
\date \today
\theoremstyle{definition}
\begin{document}
\maketitle

\begin{abstract} 
In this paper, we study the nonexpansive properties of metric resolvent, and present a convergence rate analysis for the associated fixed-point iterations (Banach-Picard and Krasnosel'skii-Mann types). Equipped with a variable metric, we develop the global ergodic and non-ergodic iteration-complexity bounds in terms of both solution distance and objective value. A byproduct of our expositions also extends the proximity operator and Moreau's decomposition identity to arbitrary variable metric. It is further shown that  many classes of the first-order operator splitting algorithms, including alternating direction methods of multipliers, primal-dual hybrid gradient  and Bregman iterations, can be expressed by the fixed-point iterations of a simple metric resolvent, and thus, the convergence can be analyzed within this unified framework. 
\end{abstract}

{\bf Keywords} 
Generalized metric resolvent, nonexpansiveness,  convergence rates, operator splitting algorithms

{\bf  AMS subject classifications} 
68Q25, 47H05, 90C25, 47H09

\section{Introduction}

\subsection{Resolvent and operator splitting algorithm}
Resolvent plays a fundamental role in the developments of convex analysis and monotone operator theory, since it provides an effective way to replace  finding zeros problem\footnote{$\cA: \cH \mapsto 2^\cH$ is a set-valued maximally monotone operator  \cite[Definition 20.1]{plc_book}, where  $\cH$ denotes a finite-dimensional Hilbert space, especially when the dimensionality is not specified.}:
\be \label{inclusion}
{\bf 0} \in \cA \bbstar
\ee
 with a fixed point problem for resolvent of $\cA$, defined by: $\cJ_\cA = (\cI+ \cA)^{-1}$  \cite[Definition 23.1]{plc_book}, where $\cI$ denotes identity operator \cite{res_1,res_7}.   The resolvent $\cJ_\cA$ is single-valued and firmly nonexpansive, if $\cA$ is maximally monotone \cite[Proposition 23.7, Corollary 23.8, Corollary 23.10]{plc_book}. In particular, 
if $\cA = \partial h$ for some $h \in \Gamma(\cH)$\footnote{$\Gamma(\cH)$ stands for a class of proer, lower semi-continuous and convex functions.}, the resolvent becomes a proximity operator: $\prox_h: \cH \mapsto \cH: \bb \mapsto \arg\min_\bx h(\bx) + \frac{1}{2} \big\|\bx - \bb\big\|^2$ \cite[Eq.(2.13)]{plc}. The classical resolvent and the associated proximity operator have been extensively studied in \cite{plc,plc_book}. In this paper, we extend the classical resolvent to the case of arbitrary variable metric $\cQ$: 
\be \label{t}
\cT:=(\cA+\cQ)^{-1} \cQ = (\cI + \cQ^{-1} \cA)^{-1}
: = \cJ_{\cQ^{-1} \cA}
\ee
and study its properties. 

Nowadays, there has been a revived interest in the design and analysis of the first-order operator splitting algorithms \cite{teboulle_2018}, typically including Douglas--Rachford splitting (DRS) \cite{drs}, alternating direction method of multipliers (ADMM) \cite{glowinski_1,glowinski_2}, primal-dual splitting (PDS) \cite{pdhg,cp_2011} and  Bregman methods \cite{osher_2005,yin_2008,sb,zxq}.  The convergence analysis of these algorithms is often performed case-by-case. Though  some unified frameworks and tools have recently been proposed, e.g. nonexpansive operator \cite{ljw_mapr}, Fej\'{e}r monotonicity \cite{plc_vu}, Bregman proximal framework \cite{teboulle_2018}, assymetric forward-backward-adjoint splitting scheme \cite{latafat}, and others \cite{unified_ieee,beck_unified},  these works do not associate the operator splitting algorithms with a  simple and unified nonexpansive mapping.  The main purpose of this paper is to show that many splitting algorithms can be simply expressed by the metric resolvent \eqref{t} or its generalized/relaxed version.

\subsection{Contributions}
In this paper, we first study the nonexpansiveness, cocoerciveness and areveragedness of the metric resolvent  \eqref{t}. Then, for the associated fixed-point Banach-Picard and  Krasnosel'skii-Mann iterations, we establish the global pointwise/nonergodic and ergodic convergence rates in terms of the solution distance and aysmptotic regularity. The convergence rates in terms of objective function value are further presented, when the  metric resolvent is rewritten as a generalized proximity operator. 

The main results can be applied to many existing operator splitting methods. In particular, we show that a variety of popular algorithms can be uniformly represented by the relaxed metric resolvent, by specifying the monotone operator $\cA$, variable metric $\cQ$, and relaxation matrix $\cM$. This unification and simplification helps to understand various algorithms
with substantially simplified analysis, compared to the original proofs in the literature.

\subsection{Related work}
\vskip.1cm
\paragraph{Relation with \cite{boyd_control,boyd_2014,
hbs_2015}} The DRS or equivalently ADMM for the dual problem have been analyzed in these works, using the tool of  resolvent and reflected resolvent. Our results go much beyond them, by  (1) reinterpreting DRS by a new form of metric resolvent, which is simpler than the classical expression; (2) applying the metric resolvent to many other classes of algorithms.

\vskip.1cm
\paragraph{Relation with \cite{pfbs_siam,ljw_mapr,ywt_2017}} The more complicated operator splitting algorithms may be reformulated as a simple fixed-point iteration of  nonexpansive mapping \cite{ljw_mapr}, e.g. three-operator splitting algorithm \cite{ywt_2017}, GFBS \cite{pfbs_siam}. In these works, the nonexpansive mapping $\cT$ depends on specific algorithm, for which the nonexpansive properties have to be analyzed case-by-case. Our work differs from them in that: we always use the same form of the generalized metric resolvent, which provides a unified treatment of many classes of algorithms, for which the convergence behaviours can be immediately obtained by the nonexpansiveness of the metric resolvent.

\vskip.1cm
\paragraph{Relation with \cite{hbs_siam_2012,hbs_prs,hbs_siam_2012_2,hbs_yxm_2015,
hbs_jmiv_2017}} In these works, the authors revisited DRS and PDHG within the framework of proximal point algorithm (PPA) by reformulation of variational inequality. Based on the metric resolvent, our exposition takes a much simpler, yet somewhat equivalent route to tackle the PPA. Since that the fixed-point iteration of metric resolvent is essentially a PPA, we explore more intricate properties related to the strong monotonicity and objective value, which are never discussed in the literature.

\vskip.1cm
\paragraph{Relation with \cite{osher_2005,cjf_1,cjf_2,zxq,sb}} The Bregman iteration and many variants have been developed in the literature. In this work, we revisit these algorithms, and show that they also fall into the category of the metric resolvent, and thus, the convergence analysis is substantially simplified, compared to the original proofs in the literature.

\vskip.1cm
\paragraph{Relation with \cite{pdhg,esser,cp_2011}} We also revisit the PDHG algorithms developed in these works, and show that the PDHG is essentially equivalent to a metric resolvent. More convergence properties are investigated. 

\vskip.1cm
\paragraph{Relation with \cite{lotito,qian,bonnans}} These early works extended the classical PPA \cite{martinet,rtr_1976} to the variable metric case.  In this paper, we tackle this method from the perspective of metric resolvent, and show the wide range of applications to the operator splitting algorithms.

\subsection{Notations} \label{sec_notation}
We use standard notations and concepts from convex analysis and variational analysis, which, unless otherwise specified, can all be found in the classical and recent monographs \cite{rtr_book,rtr_book_2,plc_book,beck_book}.

\vskip.1cm
A few more words about our  notations are in order. The classes of positive semi-definite (PSD) and positive definite (PD) matrices are denoted by $\cM^+$ and $\cM^{++}$, respectively.  The classes of symmetric, symmetric and PSD, symmetric and PD matrices are denoted by $\cM_\cS$, $\cM_\cS^+$, and $\cM_\cS^{++}$, respectively.  For our specific use,  the $\cQ$-based inner product (where $\cQ$ is an arbitrary square matrix) is defined as: $\langle \ba | \bb \rangle_\cQ :=  \langle \cQ \ba | \bb \rangle =  \langle  \ba | \cQ^\top \bb \rangle$, $\forall (\ba,\bb) \in \cH \times \cH$; the $\cQ$-norm is defined as: $\|\ba\|_\cQ^2 :=  \langle \cQ\ba | \ba \rangle$, $\forall \ba \in \cH$. Note that unlike the conventional treatment in the literature,  $\cQ$ is {\it not} assumed to be symmetric and PSD here, and hence, $\|\cdot\|_\cQ$ is not always well--defined.

\vskip.1cm
Note that our expositions  in this paper are largely based on the basic properties of nonexpansive operators in the context of   arbitrary variable metric $\cQ$, which have been thoroughly discussed in  \cite{fxue_1}. For sake of completeness and convenience, a key notion of {\it  $\cQ$--based $\xi$--Lipschitz $\alpha$--averaged} is restated here.
\begin{definition} \label{def_lip} 
{\rm  \cite[Definition 2.2]{fxue_1}
An operator $\cT: \cH \mapsto \cH$ is said to be {\it  $\cQ$--based $\xi$--Lipschitz $\alpha$--averaged}   with $\xi \in \ ]0, +\infty[$ and $\alpha\in \ ]0, 1[$, denoted by $\cT \in \cF^\cQ_{\xi,\alpha}$,  if there exists a $\cQ$--based $\xi$--Lipschitz continuous operator $\cK: \cH \mapsto \cH$, such that $\cT = (1-\alpha) \cI + \alpha \cK$. In particular, if $\xi \in \ ]1, +\infty[$, $\cT$ is {\it $\cQ$--weakly averaged}; if $\xi \in\  ]0,1]$,  $\cT$ is {\it $\cQ$--strongly averaged}.
}
\end{definition}

\section{The metric resolvent}
\label{sec_resolvent}
This section investigates the nonexpansiveness of the metric resolvent and the convergence properties of its associated Banach-Picard iteration. Many results in this part follow from  \cite[Sections 2, 3]{fxue_1}.

\subsection{Nonexpansive properties}
The basic properties of metric resolvent \eqref{t} are summarized in the following lemma.

\begin{lemma} [Nonexpansiveness of metric resolvent] 
\label{l_t}
Given $\cT$ defined in \eqref{t}, then  the following hold.

{\rm (i)}  $\cT$  is $\cQ$--partly nonexpansive (i.e. $\cQ$--based 1--cocoercive). 

{\rm (ii)}  If $\cQ\in \cM_\cS$,  $\cT$ is  $\cQ$--firmly nonexpansive (i.e. $\cT \in \cF^\cQ_{1, \frac{1}{2}}$).

{\rm (iii)} If $\cQ\in \cM_\cS^{++}$, $\cT$ can  be equivalently expressed as:
 \[
\cT =  \cJ_{\cQ^{-1} \circ \cA} = \cQ^{-1} \circ 
\cJ_{ \cA \circ \cQ^{-1} } \circ \cQ =
\cQ^{-\frac{1}{2}} \circ \cJ_{\cQ^{-\frac{1}{2}} \circ \cA \circ \cQ^{-\frac{1}{2}} } \circ \cQ^{\frac{1}{2}}   
 \]
\end{lemma}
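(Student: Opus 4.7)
The plan is to treat the three parts separately since they rely on different tools: (i) uses the monotonicity of $\cA$ directly, (ii) upgrades (i) via the symmetry of $\cQ$, and (iii) is pure algebraic rewriting of resolvents.

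For (i), I would start from the characterization of $\cT$: for any $\bx_1,\bx_2 \in \cH$, setting $\by_i=\cT\bx_i$ one has $\cQ\bx_i \in (\cA+\cQ)\by_i$, so $\cQ(\bx_i-\by_i)\in \cA\by_i$. Applying the monotonicity of $\cA$ to the pair $(\by_1,\by_2)$ yields
\[
\langle \cQ(\bx_1-\by_1)-\cQ(\bx_2-\by_2)\,\vert\,\by_1-\by_2\rangle \;\geq\; 0,
\]
which rearranges to
\[
\langle \bx_1-\bx_2\,\vert\, \cT\bx_1-\cT\bx_2\rangle_\cQ \;\geq\; \|\cT\bx_1-\cT\bx_2\|_\cQ^2 .
\]
This is exactly the $\cQ$-based $1$-cocoercivity (equivalently $\cQ$-partly nonexpansiveness) in the sense of Definition~\ref{def_lip} / \cite[Sec.~2]{fxue_1}, so (i) is established without needing $\cQ$ to be symmetric or PSD.

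For (ii), assuming $\cQ\in\cM_\cS$ I would introduce the reflection $\cK := 2\cT - \cI$ and show that $\cK$ is $\cQ$-nonexpansive; writing $\cT=\tfrac12\cI+\tfrac12\cK$ then places $\cT$ in $\cF^\cQ_{1,1/2}$. Concretely, I would expand $\|\cK\bx_1-\cK\bx_2\|_\cQ^2$ using the symmetry of $\cQ$ (so that the $\cQ$-inner product is genuinely symmetric), which gives
\[
\|\cK\bx_1-\cK\bx_2\|_\cQ^2 = \|\bx_1-\bx_2\|_\cQ^2 - 4\bigl(\langle \bx_1-\bx_2\vert \cT\bx_1-\cT\bx_2\rangle_\cQ - \|\cT\bx_1-\cT\bx_2\|_\cQ^2\bigr),
\]
and the bracketed term is non-negative by (i). Hence $\|\cK\bx_1-\cK\bx_2\|_\cQ^2 \leq \|\bx_1-\bx_2\|_\cQ^2$, giving the claim. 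This is the step where one truly needs $\cQ=\cQ^\top$; without it the cross terms in the expansion no longer combine into the cocoercivity inequality, which is the main obstacle and the reason for the stronger hypothesis.

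For (iii), I would chase the fixed-point equations. Starting from $\by = \cT\bx$, i.e.\ $\cQ\bx\in(\cA+\cQ)\by$, the substitution $\bz=\cQ\by$ immediately gives $\cQ\bx\in \bz+\cA\cQ^{-1}\bz = (\cI+\cA\circ\cQ^{-1})\bz$, whence $\by=\cQ^{-1}\cJ_{\cA\circ\cQ^{-1}}\cQ\bx$, yielding the middle expression. The right-hand expression is obtained analogously by setting $\bz=\cQ^{1/2}\by$ and left-multiplying the inclusion by $\cQ^{-1/2}$, which requires $\cQ\in\cM_\cS^{++}$ so that $\cQ^{\pm 1/2}$ are well-defined symmetric operators. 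The identity $\cT=\cJ_{\cQ^{-1}\cA}$ is just the definition \eqref{t}. No delicate estimate is needed in (iii); the only care is to ensure all inverses exist, which is guaranteed by $\cQ\in\cM_\cS^{++}$ together with the maximal monotonicity of $\cA$.
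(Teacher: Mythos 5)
Your proposal is correct and, for parts (i) and (iii), follows essentially the same route as the paper: (i) is the same one-line application of the monotonicity of $\cA$ to the inclusion $\cQ(\bb-\cT\bb)\in\cA\cT\bb$, and (iii) is the same change-of-variable chase through the resolvent inclusions. The only divergence is in (ii), where the paper simply cites \cite[Lemma 2.3]{fxue_1} while you supply the standard self-contained argument via the reflected operator $\cK=2\cT-\cI$, expanding $\|\cK\bb_1-\cK\bb_2\|_\cQ^2$ and invoking (i); this is the classical equivalence between firm nonexpansiveness and nonexpansiveness of the reflector, correctly identifies symmetry of $\cQ$ as the essential hypothesis, and is if anything more complete than the paper's treatment.
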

\begin{proof} 
(i) By monotonicity of $\cA$, we have:
\begin{eqnarray}
&&  \big\| \cT \bb_1 - \cT \bb_2  \big\|_\cQ^2 
\nonumber \\  &=&  \langle \cQ \cT \bb_1 - \cQ \cT \bb_2
\big|  \cT\bb_1-\cT\bb_2 \rangle 
\nonumber \\ 
&  \le &\langle \cQ \cT \bb_1 - \cQ \cT \bb_2 | \cT\bb_1-\cT\bb_2 \rangle + 
\langle \cA \cT \bb_1 - \cA \cT \bb_2 | \cT\bb_1-\cT\bb_2 \rangle  \quad \text{by monotone $\cA$}
\nonumber \\ &= & 
 \langle \bb_1 - \bb_2 | \cT\bb_1-\cT\bb_2 \rangle_\cQ
 \quad \text{since $\cQ\cT +\cA \cT = \cQ$ by \eqref{t}} 
 \nonumber 
\end{eqnarray}

\vskip.1cm
(ii) \cite[Lemma 2.3]{fxue_1}.

\vskip.1cm
(iii) The first equality: clear.

The second equality: $\ba = \cT \bb = (\cA +\cQ)^{-1} \cQ \bb \Longrightarrow 
\cQ \bb \in \cA \cQ^{-1} \cQ \ba + \cQ \ba = 
(\cI + \cA \circ \cQ^{-1})  \cQ \ba  \Longrightarrow 
\cQ \ba =(\cI + \cA \circ \cQ^{-1})^{-1} (\cQ \bb) 
\Longrightarrow  \ba  = \cQ^{-1} 
(\cI + \cA \circ \cQ^{-1})^{-1} (\cQ \bb)  $.

The third equality: 
$\ba = \cT \bb = (\cA +\cQ)^{-1} \cQ \bb \Longrightarrow 
\cQ \bb \in \cA  \ba + \cQ \ba  \Longrightarrow 
\cQ^{\frac{1}{2}} \bb \in \cQ^{-\frac{1}{2}} \cA \ba + 
\cQ^{\frac{1}{2}} \ba = \cQ^{-\frac{1}{2}} \cA
\cQ^{-\frac{1}{2}} \cQ^{\frac{1}{2}} \ba + 
\cQ^{\frac{1}{2}} \ba = (\cQ^{-\frac{1}{2}} \cA
\cQ^{-\frac{1}{2}} +\cI)  \cQ^{\frac{1}{2}} \ba 
\Longrightarrow \cQ^{\frac{1}{2}} \ba = (\cQ^{-\frac{1}{2}} \cA \cQ^{-\frac{1}{2}} +\cI) ^{-1}  \cQ^{\frac{1}{2}} \bb 
\Longrightarrow \ba = \cQ^{-\frac{1}{2}}  
(\cQ^{-\frac{1}{2}} \cA \cQ^{-\frac{1}{2}} +\cI) ^{-1}  
(\cQ^{\frac{1}{2}}    \bb) $.
\end{proof}

\vskip.2cm
Similar to Lemma \ref{l_t}, the complementary operator $\cR:=\cI - \cT$ has the following properties.

\begin{lemma} [Nonexpansiveness of $\cR$]
 \label{l_r}
The operator $\cR = \cI - \cT$ satisfies: 

{\rm (i)}  $\cR$ is $\cQ^\top$--partly nonexpansive  (i.e. $\cQ^\top$--based 1--cocoercive).

{\rm (ii)} If $\cQ\in \cM_\cS$,  $\cR$ is $\cQ$--firmly nonexpansive (i.e. $\cR \in \cF^\cQ_{1, \frac{1}{2}}$).

{\rm (iii)} If $\cQ\in \cM_\cS^{++}$, $\cR$ can be equivalently expressed as:
\[
\cR =  \cJ_{\cA^{-1} \circ \cQ} = 
\cQ^{-1} \circ  \cJ_{\cQ \circ \cA^{-1}} \circ \cQ  =
 \cQ^{-\frac{1}{2}} \circ \cJ_{  \cQ^{\frac{1}{2}} \circ \cA^{-1} \circ \cQ^{\frac{1}{2}} } \circ \cQ^{\frac{1}{2}}
\]
\end{lemma}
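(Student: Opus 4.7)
The plan is to mirror the proof of Lemma~\ref{l_t}, extracting from the defining inclusion of $\cT$ a companion inclusion for $\cR$ and then exploiting monotonicity of $\cA$. The key observation is that $\ba = \cT\bb$ is equivalent to $\cQ\bb - \cQ\ba \in \cA\ba$, which rewrites as the fundamental relation $\cQ\cR\bb \in \cA\cT\bb = \cA(\bb - \cR\bb)$ valid for every $\bb \in \cH$. This single inclusion drives all three parts.

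For part~(i), I would apply this inclusion at two points $\bb_1,\bb_2$ and invoke monotonicity of $\cA$ on the pair $(\cT\bb_1,\cT\bb_2)$ to obtain $\langle \cQ\cR\bb_1 - \cQ\cR\bb_2 \mid \cT\bb_1 - \cT\bb_2\rangle \ge 0$. Substituting $\cT = \cI - \cR$ and rearranging yields $\|\cR\bb_1 - \cR\bb_2\|_\cQ^2 \le \langle\cQ(\cR\bb_1 - \cR\bb_2) \mid \bb_1 - \bb_2\rangle$. Moving $\cQ$ across the inner product rewrites the right-hand side as $\langle\cR\bb_1-\cR\bb_2 \mid \bb_1-\bb_2\rangle_{\cQ^\top}$, and the scalar identity $\|\cdot\|_\cQ^2 = \|\cdot\|_{\cQ^\top}^2$ (since $\ba^\top\cQ\ba$ equals its own transpose) delivers the claimed $\cQ^\top$-based $1$-cocoercivity.

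For part~(ii), the hypothesis $\cQ\in\cM_\cS$ gives $\cQ = \cQ^\top$, so part~(i) specializes directly to $\cQ$-firm nonexpansiveness of $\cR$. Equivalently, one may invoke the Pythagoras-type expansion $\|\bb_1-\bb_2\|_\cQ^2 = \|\cT\bb_1-\cT\bb_2\|_\cQ^2 + 2\langle \cT\bb_1-\cT\bb_2 \mid \cR\bb_1-\cR\bb_2\rangle_\cQ + \|\cR\bb_1-\cR\bb_2\|_\cQ^2$ and transfer firm nonexpansiveness from $\cT$ (established in Lemma~\ref{l_t}(ii)) to its complement $\cR$ via the symmetry of the cross term.

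For part~(iii), the same fundamental inclusion, now with $\cQ\in\cM_\cS^{++}$ invertible, rearranges to $\bb - \cR\bb \in \cA^{-1}\cQ\cR\bb$, hence $\bb \in (\cI + \cA^{-1}\circ\cQ)\cR\bb$, giving $\cR = \cJ_{\cA^{-1}\circ\cQ}$. The remaining two equalities follow by the changes of variable $\ba'=\cQ\ba,\bb'=\cQ\bb$ and $\ba'=\cQ^{1/2}\ba,\bb'=\cQ^{1/2}\bb$ applied to the base relation $\cQ\ba\in\cA(\bb-\ba)$ with $\ba=\cR\bb$, exactly mimicking the algebraic chase in the proof of Lemma~\ref{l_t}(iii). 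I expect the main obstacle to be the $\cQ$ versus $\cQ^\top$ bookkeeping in part~(i), since $\cQ$ is not assumed symmetric there and the transpose appears naturally when shuffling $\cQ$ across the inner product; once this is untangled, parts~(ii) and~(iii) reduce to mechanical manipulations.
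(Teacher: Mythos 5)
Your proposal is correct and follows essentially the same route as the paper: part (iii) is proved by exactly the same inclusion chase $\cQ\cR\bb\in\cA\cT\bb$ followed by the two changes of variable, and your argument for (i) is the direct adaptation to $\cR$ of the monotonicity computation the paper performs for $\cT$ in Lemma \ref{l_t}(i). The only difference is that for (i)--(ii) the paper simply cites an external lemma, whereas you supply the computation explicitly (including the $\cQ$ versus $\cQ^\top$ bookkeeping and the observation $\|\cdot\|_\cQ^2=\|\cdot\|_{\cQ^\top}^2$), which is consistent with what that reference must contain.
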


\begin{proof} 
(i)--(ii)  \cite[Lemma 2.3]{fxue_1}.

\vskip.1cm
(iii)  The first equality:  $\ba=\cT\bb = (\cA +\cQ)^{-1} \cQ \bb \Longrightarrow \cQ(\bb-\ba) \in \cA \ba  \Longrightarrow \ba \in \cA^{-1} \cQ (\bb-\ba) \Longrightarrow \bb -\ba \in \bb - \cA^{-1}  \cQ (\bb-\ba)  \Longrightarrow \cR \bb = (\cI-\cT)\bb = \bb -\ba  = (\cI + \cA^{-1} \circ \cQ)^{-1}\bb$. 

The second equality: $\ba = \cR \bb =(\cI + \cA^{-1} \circ \cQ)^{-1}\bb \Longrightarrow 
 \bb \in  \ba + \cA^{-1} \cQ \ba   \Longrightarrow 
 \cQ \bb \in  \cQ \ba + \cQ \cA^{-1} \cQ \ba 
 \Longrightarrow \cQ \ba = (\cI+ \cQ \cA^{-1})^{-1} \cQ \bb
 \Longrightarrow  \ba = \cQ^{-1} (\cI+ \cQ \cA^{-1})^{-1} \cQ \bb $.

The third equality: $\ba = \cR \bb =(\cI + \cA^{-1} \circ \cQ)^{-1}\bb \Longrightarrow 
 \bb \in  \ba + \cA^{-1} \cQ \ba   \Longrightarrow 
 \cQ^{\frac{1}{2}} \bb \in \cQ^{\frac{1}{2}} \ba + \cQ^{\frac{1}{2}} \cA^{-1} \cQ^{\frac{1}{2}} \cQ^{\frac{1}{2}} \ba 
 \Longrightarrow \cQ^{\frac{1}{2}} \ba = (\cI+ \cQ^{\frac{1}{2}} \cA^{-1} \cQ^{\frac{1}{2}} )^{-1} \cQ^{\frac{1}{2}} \bb
 \Longrightarrow  \ba = \cQ^{-\frac{1}{2}} (\cI+ \cQ^{\frac{1}{2}} \cA^{-1} \cQ^{\frac{1}{2}} )^{-1} \cQ^{\frac{1}{2}} \bb$.
\end{proof}

If $\cA$ is $\mu$-strongly monotone,  Lemmas \ref{l_t} and \ref{l_r} can be strengthened as follows.
\begin{lemma}  [Cocoerciveness and averagedness]
\label{l_strong}
Given $\cT$ defined in \eqref{t} and $\cR = \cI - \cT$, if $\cA$ is $\mu$-strongly  monotone, $\cQ \in \cM^{++}$, then, the following hold.

\vskip.1cm
{\rm (i)}  $\cT$ is $\cQ$--based $(1+\frac{\mu}{\|\cQ\|})$--cocoercive, $\cR$ is $\cQ^\top$--based 1--cocoercive.

{\rm (ii)}  $\cT \in \cF^\cQ_{\frac{\|\cQ\|}{2\mu + \|\cQ\|}, \frac{2\mu +\|\cQ\|} {2\mu +2\|\cQ\|} }$,    $\cR \in \cF^\cQ_{1, \frac{\|\cQ\|} { 2(\|\cQ\| +  \mu ) } }$, 
if $\cQ\in \cM_\cS^+$. 

{\rm (iii)} Both $\cT$ and  $\cR$ are $\cQ$--firmly nonexpansive, if $\cQ \in \cM_\cS^+$.

{\rm (iv)} If $\cQ \in \cM_\cS^{++}$,  $\cR$ satisfies:
\[
\big\langle \bb_1 -  \bb_2\big| 
  \cR\bb_1 -  \cR \bb_2 \big\rangle_\cQ 
  \ge \frac{\|\cQ\| + \mu} { \|\cQ\| +2\mu } 
  \big\| \cR\bb_1 - \cR \bb_2 \big\|_\cQ^2
+ \frac{ \mu} { \|\cQ\| +2\mu } 
  \big\| \bb_1 -  \bb_2 \big\|_\cQ^2
\]

\end{lemma}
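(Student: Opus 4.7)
The plan is to derive everything from a single master inequality together with one application of strong monotonicity, and then specialize to each of the four items. From $\cT = (\cA+\cQ)^{-1}\cQ$ we have $\cA\cT\bb + \cQ\cT\bb = \cQ\bb$, hence $\cA\cT\bb = \cQ\cR\bb$, so $\cQ\cR\bb \in \cA(\cT\bb)$. Evaluating the $\mu$-strong monotonicity of $\cA$ at the pairs $(\cT\bb_1,\cQ\cR\bb_1)$ and $(\cT\bb_2,\cQ\cR\bb_2)$ produces
\[
\langle \cR\bb_1 - \cR\bb_2 \,|\, \cT\bb_1 - \cT\bb_2\rangle_\cQ \;\geq\; \mu \|\cT\bb_1 - \cT\bb_2\|^2.
\]
Applying the bound $\|\bz\|^2 \geq \|\bz\|_\cQ^2/\|\cQ\|$ (valid for $\cQ \in \cM^+$) and using $\cR + \cT = \cI$, I would add $\|\cT\bb_1-\cT\bb_2\|_\cQ^2$ to both sides to read off the $(1+\mu/\|\cQ\|)$-cocoercivity of $\cT$, giving (i); the $\cR$-part of (i) already appears in Lemma~\ref{l_r}(i).

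For (iv) I would expand the master inequality more carefully under the stronger assumption $\cQ\in\cM_\cS^{++}$. With $u=\bb_1-\bb_2$ and $v=\cR\bb_1-\cR\bb_2$ the inequality reads $\langle u-v\,|\,v\rangle_\cQ \geq \mu\|u-v\|^2$, and symmetry of $\cQ$ permits the expansion $\|u-v\|_\cQ^2 = \|u\|_\cQ^2 - 2\langle u|v\rangle_\cQ + \|v\|_\cQ^2$. One more use of the operator-norm bound yields
\[
\Big(1 + \tfrac{2\mu}{\|\cQ\|}\Big)\langle u|v\rangle_\cQ \;\geq\; \Big(1+\tfrac{\mu}{\|\cQ\|}\Big)\|v\|_\cQ^2 + \tfrac{\mu}{\|\cQ\|}\|u\|_\cQ^2,
\]
and dividing through by $(\|\cQ\|+2\mu)/\|\cQ\|$ delivers (iv) verbatim.

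Items (ii) and (iii) reduce to algebraic bookkeeping once (i) is available. For $\cT$, I would fix $(\alpha,\xi)$ so that $\cK := (\cT - (1-\alpha)\cI)/\alpha$ is $\cQ$-based $\xi$-Lipschitz; expanding $\|\cK\bb_1-\cK\bb_2\|_\cQ^2$ and substituting the cocoercivity from (i), the coefficient of $\|\cT\bb_1-\cT\bb_2\|_\cQ^2$ vanishes precisely when $2(1-\alpha)(1+\mu/\|\cQ\|) = 1$, forcing $1-\alpha = \|\cQ\|/(2\mu+2\|\cQ\|)$ and consequently $\xi = (1-\alpha)/\alpha = \|\cQ\|/(2\mu+\|\cQ\|)$, matching the stated parameters. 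An entirely parallel calculation for $\cR$, now requiring $\cK'$ to be $1$-Lipschitz, pins $\alpha' = \|\cQ\|/(2(\|\cQ\|+\mu))$. Item (iii) is then immediate because any $\beta$-cocoercive operator with $\beta \geq 1$ is $\cQ$-firmly nonexpansive, and the $\cR$-part also follows from Lemma~\ref{l_r}(ii).

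The main obstacle is the bookkeeping inside (ii): matching the somewhat awkward constants to the cocoercivity constant $1+\mu/\|\cQ\|$ coming out of (i). The guiding observation is the identity $\alpha\xi = 1-\alpha$ hiding inside the stated parameters; this is exactly what makes the cross-term in the expansion of $\|\cK\bb_1-\cK\bb_2\|_\cQ^2$ close up with no slack, so that the cocoercivity bound matches the target Lipschitz bound tightly.
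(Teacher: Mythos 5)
Your argument is correct, and for part (i) it coincides with the paper's own proof: the paper also starts from $\cQ\cR\bb\in\cA(\cT\bb)$ (equivalently, it adds the strong-monotonicity term to the inequality already established in Lemma~\ref{l_t}--(i)) and then invokes $\|\cdot\|^2\ge\|\cdot\|_\cQ^2/\|\cQ\|$. The divergence is in (ii)--(iv): there the paper simply cites the companion reference \cite{fxue_1} (Theorem 2.8, Lemma 2.11) and dismisses (iv) as ``simple algebraic manipulations,'' whereas you supply self-contained derivations. Your computation for (iv) is exactly right, and your ``vanishing cross-term'' device for (ii) is precisely what the cited transfer results from \cite{fxue_1} encode, so your route is more elementary and makes the lemma independent of the external reference. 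One point deserves to be made explicit in the $\cR$-part of (ii): the ``entirely parallel calculation'' must feed the $(1+\mu/\|\cQ\|)$-cocoercivity of $\cT$ from (i) into the cross term, not the mere $1$-cocoercivity of $\cR$ --- with only the latter you would be forced to $\alpha'=1/2$ and could not reach $\alpha'=\|\cQ\|/(2(\|\cQ\|+\mu))$. This works out because the natural candidate $\cK'=(\cR-(1-\alpha')\cI)/\alpha'=\cI-\tfrac{1}{\alpha'}\cT$ produces the cross term $\langle \bb_1-\bb_2\mid\cT\bb_1-\cT\bb_2\rangle_\cQ$, and zeroing the coefficient of $\|\cT\bb_1-\cT\bb_2\|_\cQ^2$ forces $\alpha'=1/(2(1+\mu/\|\cQ\|))$, which is the stated value; alternatively one can run the expansion against your inequality (iv), which is the exact two-sided bound needed. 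With that clarification your proposal is complete and, arguably, more informative than the paper's proof.
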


\begin{proof}
(i) By the strong monotonicity of $\cA$, i.e.  $\langle \cA \bb_1 - \cA \bb_2 | \bb_1 - \bb_2 \rangle \ge \mu \|\bb_1 -\bb_2\|^2$, $\forall (\bb_1,\bb_2) \in \cH \times \cH$, we have, from Lemma \ref{l_t}--(i), that:
\[
 \big\| \cT \bb_1 - \cT \bb_2  \big\|_\cQ^2
 + \mu \big\| \cT \bb_1 - \cT \bb_2 \big\|^2 \le 
 \langle \bb_1 - \bb_2 | \cT\bb_1-\cT\bb_2 \rangle_\cQ
 \]
Noting that $ \big\| \cT \bb_1 - \cT \bb_2 \big\|^2
\ge \frac{1}{\|\cQ\|}  \big\| \cT \bb_1 - \cT \bb_2 \big\|_\cQ^2$ by $\cQ \in \cM^+$, it yields:
\[
\big \langle \bb_1 - \bb_2 \big| \cT\bb_1-\cT\bb_2 
\big \rangle_\cQ
 \ge \Big( 1+\frac{\mu}{\|\cQ\|} \Big) 
 \big\| \cT \bb_1 - \cT \bb_2  \big\|_\cQ^2 
\]
The cocoerciveness of $\cR$ is obtained by \cite[Lemma 2.11--(ii)]{fxue_1}, since $1+\frac{\mu}{\|\cQ\|} > 1$. 

\vskip.2cm
(ii) \cite[Lemma 2.11--(i)]{fxue_1} or letting $\gamma=1$ in \cite[Theorem 2.8--(iv)]{fxue_1}.

\vskip.2cm
(iii) \cite[Theorem 2.8--(ii)]{fxue_1}.

\vskip.2cm
(iv) follows from (i), by several simple algebraic manipulations.
\end{proof}

\subsection{Generalized proximity operator}
If $\cA$ is {\it cyclically} maximally monotone, then, $\exists h \in \Gamma(\cH)$, such that $\cA = \partial h$ \cite[Theorem 22.14]{plc_book}. Thus, by Fermat's rule \cite[Theorem 16.2]{plc_book},  the monotone inclusion ${\bf 0} \in \cA \bbstar$ is equivalent to finding a minimizer of $h$, i.e. $\bbstar \in \zer \partial h = \Arg\min h$. The metric resolvent \eqref{t} becomes a generalized proximity operator: $\prox_h^\cQ: \cH \mapsto \cH: \bb \mapsto \arg\min_\bx h(\bx) + \frac{1}{2} \big\|\bx - \bb\big\|^2_\cQ$  \cite[Definition 2.3]{pesquet_2016}. Then, the generalized  proximity operator is connected to the ordinary one $\prox_h$ via  the following result. 
\begin{proposition} \label{p_id_prox}
Given $\cT$ defined by \eqref{t} and $\cR : =\cI - \cT$, if $\cA = \partial h$, then, the following hold.

{\rm (i)} $\cT =   \cQ^{-\frac{1}{2}} \circ \prox_{ h \circ \cQ^{-\frac{1}{2}} } \circ \cQ^{\frac{1}{2}} = \prox_h^\cQ$.

{\rm (ii)} $\cR =   \cQ^{-\frac{1}{2}} \circ \prox_{ h^* \circ \cQ^{\frac{1}{2}} } \circ \cQ^{\frac{1}{2}} = \cQ^{-1} \circ \prox_{h^*}^{\cQ^{-1}} \circ \cQ$.

\end{proposition}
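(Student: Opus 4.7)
The plan is to reduce both identities to the Lemmas already proved for $\cT$ and $\cR$, and then invoke two standard facts from convex analysis: the chain rule for subdifferentials under self-adjoint linear transformations, and the inverse identity $(\partial h)^{-1} = \partial h^*$. Throughout, I would assume (as is implicit in the statement, since $\cQ^{\pm 1/2}$ appears) that $\cQ \in \cM_\cS^{++}$, so that $\cQ^{1/2}$ and $\cQ^{-1/2}$ are well-defined, bijective, self-adjoint, and positive definite.

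For part (i), I would start from the third equality in Lemma~\ref{l_t}(iii), namely $\cT = \cQ^{-1/2} \circ \cJ_{\cQ^{-1/2} \circ \cA \circ \cQ^{-1/2}} \circ \cQ^{1/2}$. Since $\cA = \partial h$ and $\cQ^{-1/2}$ is self-adjoint and invertible, the subdifferential chain rule yields $\partial(h \circ \cQ^{-1/2}) = \cQ^{-1/2} \circ \partial h \circ \cQ^{-1/2}$, and therefore the inner resolvent reduces exactly to $\prox_{h \circ \cQ^{-1/2}}$. This gives the first equality of (i). For the second equality, $\cT = \prox_h^\cQ$, I would argue directly from the variational definition: the first-order optimality condition for $\arg\min_\bx h(\bx) + \tfrac{1}{2}\|\bx-\bb\|_\cQ^2$ reads $\mathbf{0} \in \partial h(\bx) + \cQ(\bx - \bb)$, i.e.\ $\cQ \bb \in (\partial h + \cQ)\bx$, which is precisely $\bx = (\cA + \cQ)^{-1} \cQ \bb = \cT\bb$ in view of \eqref{t}.

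For part (ii), I would analogously start from Lemma~\ref{l_r}(iii) in the form $\cR = \cQ^{-1/2} \circ \cJ_{\cQ^{1/2} \circ \cA^{-1} \circ \cQ^{1/2}} \circ \cQ^{1/2}$. Since $\cA = \partial h$ with $h \in \Gamma(\cH)$, we have the Fenchel inverse identity $\cA^{-1} = (\partial h)^{-1} = \partial h^*$. Applying the subdifferential chain rule again, but now with $\cQ^{1/2}$ in place of $\cQ^{-1/2}$, yields $\cQ^{1/2} \circ \partial h^* \circ \cQ^{1/2} = \partial(h^* \circ \cQ^{1/2})$, hence the inner resolvent equals $\prox_{h^* \circ \cQ^{1/2}}$, which proves the first equality of (ii). The second equality then follows from (i) applied to the pair $(h^*, \cQ^{-1})$: by part (i) we have $\prox_{h^*}^{\cQ^{-1}} = \cQ^{1/2} \circ \prox_{h^* \circ \cQ^{1/2}} \circ \cQ^{-1/2}$, and substituting this into $\cQ^{-1} \circ \prox_{h^*}^{\cQ^{-1}} \circ \cQ$ collapses to $\cQ^{-1/2} \circ \prox_{h^* \circ \cQ^{1/2}} \circ \cQ^{1/2}$, matching the expression already obtained.

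The only genuine subtlety is the chain rule step: for a general linear map $L$ one has $\partial(h \circ L)(\bx) \supseteq L^\top \partial h(L\bx)$, and equality requires a qualification condition. Here, however, $L = \cQ^{\pm 1/2}$ is a linear bijection with $L^\top = L$, so the qualification is trivially satisfied and equality holds on all of $\cH$. Once this is noted, the rest is routine algebra combined with the previous Lemmas, and no further monotonicity or maximality arguments are needed.
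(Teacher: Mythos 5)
Your proof is correct and follows essentially the same route as the paper's: both rest on Lemma~\ref{l_t}(iii) and Lemma~\ref{l_r}(iii) together with the identities $\cQ^{\pm\frac{1}{2}}\circ\partial h\circ\cQ^{\pm\frac{1}{2}}=\partial\big(h\circ\cQ^{\pm\frac{1}{2}}\big)$ and $(\partial h)^{-1}=\partial h^*$. The only differences are cosmetic and harmless: the paper obtains $\cT=\prox_h^\cQ$ and the final identity of (ii) by changes of variables inside the $\arg\min$, whereas you use the first-order optimality condition $\cQ\bb\in(\partial h+\cQ)\bx$ for the former and reuse part (i) with the pair $(h^*,\cQ^{-1})$ for the latter.
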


\begin{proof}
(i) By Lemma \ref{l_t}--(iii) and \cite[Proposition 16.34, Example 23.3]{plc_book}, we have:
\[
\cT =   \cQ^{-\frac{1}{2}} \circ \cJ_{\cQ^{-\frac{1}{2}} \circ 
\partial h \circ \cQ^{-\frac{1}{2}} } \circ \cQ^{\frac{1}{2}}   
=  \cQ^{-\frac{1}{2}} \circ \cJ_{ \partial (h \circ \cQ^{-\frac{1}{2}} ) } \circ \cQ^{\frac{1}{2}}   
= \cQ^{-\frac{1}{2}} \circ \prox_{ h \circ \cQ^{-\frac{1}{2}} } \circ \cQ^{\frac{1}{2}}
\]
which implies that
\begin{eqnarray}
\cT: \bb & \mapsto & \cQ^{-\frac{1}{2}} \Big( \arg\min_\bu 
h(\cQ^{-\frac{1}{2}} \bu) + \frac{1}{2} \big\|\bu - \cQ^{\frac{1}{2}} \bb \big\|^2  \Big)
\nonumber \\
&= & \arg\min_\bv h( \bv) + \frac{1}{2} \big\| \cQ^{\frac{1}{2}}\bv - \cQ^{\frac{1}{2}} \bb \big\|^2 
\quad \text{by changing variable: $\bv = \cQ^{-\frac{1}{2}} \bu$}
\nonumber \\
&  = & \arg\min_\bv h( \bv) + \frac{1}{2} \big\| \bv - \bb \big\|_\cQ^2 
\nonumber 
\end{eqnarray} 

(ii) By Lemma \ref{l_r}--(iii) and \cite[Proposition 16.9]{plc_book}, we have:
\[
\cR =   \cQ^{-\frac{1}{2}} \circ \cJ_{  \cQ^{\frac{1}{2}} \circ \partial h^* \circ \cQ^{\frac{1}{2}} } \circ \cQ^{\frac{1}{2}}  
=   \cQ^{-\frac{1}{2}} \circ \cJ_{ \partial (h^* \circ \cQ^{\frac{1}{2}} ) } \circ \cQ^{\frac{1}{2}} 
= \cQ^{-\frac{1}{2}} \circ \prox_{ h^* \circ \cQ^{\frac{1}{2}} } \circ \cQ^{\frac{1}{2}}
\]
which implies that
\begin{eqnarray}
\cR: \bb & \mapsto & \cQ^{-\frac{1}{2}} \Big( \arg\min_\bu 
h^*(\cQ^{\frac{1}{2}} \bu) + \frac{1}{2} \big\|\bu - \cQ^{\frac{1}{2}} \bb \big\|^2  \Big)
\nonumber \\
&= & \cQ^{-1} \Big( \arg\min_\bv h^*( \bv) + \frac{1}{2} \big\| \cQ^{-\frac{1}{2}}\bv - \cQ^{\frac{1}{2}} \bb \big\|^2 \Big)
\quad \text{by changing variable: $\bv = \cQ^{\frac{1}{2}} \bu$}
\nonumber \\
&  = &  \cQ^{-1} \Big( \arg\min_\bv h^*( \bv) + \frac{1}{2} \big\| \bv - \cQ \bb \big\|_{\cQ^{-1}}^2 \Big)
\nonumber \\
&  = & \big( \cQ^{-1} \circ \prox_{h^*}^{\cQ^{-1}} \circ \cQ
\big)   \bb
\nonumber 
\end{eqnarray} 
\phantom{s}
\end{proof}

An important corollary immediately follows from Proposition \ref{p_id_prox}, which  generalizes the classical Moreau's identity \cite[Proposition 23.18, Theorem 14.3-(ii)]{plc_book} from $\cQ = \tau^{-1} \cI$ with $\tau >0$ to arbitrary metric $\cQ$.  
\begin{corollary} [Generalized Moreau's identity]
Given a monotone operator $\cA$, a function $h \in \Gamma(\cH)$ and a metric $\cQ \in \cM_\cS^{++}$, the identity operator $\cI$ can be equivalently decomposed as:
\[
{\rm (i)} \ \cI = \cJ_{\cQ^{-1} \cA} + \cJ_{\cA^{-1} \cQ} 
= \cJ_{\cQ^{-1} \cA} +
\cQ^{-1} \circ  \cJ_{\cQ \circ \cA^{-1}} \circ \cQ 
= \cQ^{-1} \circ \cJ_{ \cA \circ \cQ^{-1} } \circ \cQ 
+ \cJ_{\cA^{-1} \cQ} 
\]
\[
{\rm (ii)} \ \cI = \cQ^{-\frac{1}{2}} \circ \prox_{ h \circ \cQ^{-\frac{1}{2}} } \circ \cQ^{\frac{1}{2}} +\cQ^{-\frac{1}{2}} \circ \prox_{ h^* \circ \cQ^{\frac{1}{2}} } \circ \cQ^{\frac{1}{2}} 
=  \prox_h^\cQ+  \cQ^{-1} \circ \prox_{h^*}^{\cQ^{-1}} \circ \cQ
\]
\end{corollary}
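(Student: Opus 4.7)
The plan is to derive the corollary as an immediate consequence of the trivial decomposition $\cI = \cT + \cR$, which is nothing more than the defining relation $\cR := \cI - \cT$. Once this observation is in place, the entire statement reduces to collecting, and then summing, the equivalent expressions for $\cT$ and $\cR$ already obtained in Lemma~\ref{l_t}(iii), Lemma~\ref{l_r}(iii), and Proposition~\ref{p_id_prox}.

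For part~(i), I would first use the leftmost identities $\cT = \cJ_{\cQ^{-1}\cA}$ (Lemma~\ref{l_t}(iii)) and $\cR = \cJ_{\cA^{-1}\cQ}$ (Lemma~\ref{l_r}(iii)); adding these via $\cI = \cT + \cR$ produces the first equality. The middle equality follows by keeping $\cT = \cJ_{\cQ^{-1}\cA}$ and instead substituting the second form $\cR = \cQ^{-1}\circ \cJ_{\cQ \circ \cA^{-1}}\circ \cQ$ from Lemma~\ref{l_r}(iii). Symmetrically, the third equality is obtained by replacing $\cT$ with $\cQ^{-1}\circ \cJ_{\cA \circ \cQ^{-1}} \circ \cQ$ from Lemma~\ref{l_t}(iii) while keeping $\cR = \cJ_{\cA^{-1}\cQ}$. (Analogous variants with the $\cQ^{\pm 1/2}$ conjugations could also be written, but the three listed forms already follow from this direct substitution.)

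For part~(ii), the same strategy applies with Proposition~\ref{p_id_prox} as the supplier of the needed representations. Under the hypothesis $h \in \Gamma(\cH)$ with $\cA = \partial h$, Proposition~\ref{p_id_prox}(i) gives $\cT = \cQ^{-1/2}\circ \prox_{h\circ \cQ^{-1/2}}\circ \cQ^{1/2} = \prox_h^{\cQ}$, and Proposition~\ref{p_id_prox}(ii) gives $\cR = \cQ^{-1/2}\circ \prox_{h^*\circ \cQ^{1/2}}\circ \cQ^{1/2} = \cQ^{-1}\circ \prox_{h^*}^{\cQ^{-1}}\circ \cQ$. Summing both representations of $\cT$ and $\cR$ via $\cI = \cT + \cR$ yields the two displayed equalities simultaneously.

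There is no genuine obstacle here; the work has already been done in the earlier lemmas and proposition. The only items to verify are hypothesis-matching: the assumption $\cQ \in \cM_\cS^{++}$ in the corollary statement is exactly the regularity needed to invoke the alternative forms in Lemma~\ref{l_t}(iii) and Lemma~\ref{l_r}(iii), and $h \in \Gamma(\cH)$ with $\cA = \partial h$ is precisely what ensures cyclic maximal monotonicity so that Proposition~\ref{p_id_prox} applies. Finally, one may note the pleasant consistency check that setting $\cQ = \tau^{-1}\cI$ in either identity recovers the classical resolvent and Moreau decompositions from \cite[Proposition~23.18, Theorem~14.3(ii)]{plc_book}.
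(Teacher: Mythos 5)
Your proposal is correct and follows essentially the same route as the paper: the paper's own proof is simply the citation of Lemma~\ref{l_t}(iii) and Lemma~\ref{l_r}(iii) for part~(i) and Proposition~\ref{p_id_prox} for part~(ii), with the decomposition $\cI = \cT + \cR$ implicit. Your write-up just makes explicit the substitution bookkeeping and the hypothesis-matching that the paper leaves unstated.
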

\begin{proof}
(i) Lemma \ref{l_t}--(iii) and Lemma \ref{l_r}--(iii);

\vskip.1cm
(ii) Proposition \ref{p_id_prox}.
\end{proof}

\section{The Banach-Picard iteration of metric resolvent}
\subsection{Scheme}
The scheme is given as:
\be  \label{ppa}
\bb^{k+1} :=\cT \bb^k,  \quad 
\text{where\ \ }\cT:=(\cA+\cQ)^{-1} \cQ
\ee
which is equivalent to the monotone inclusion ${\bf 0} \in \cA \bb^{k+1} + \cQ (\bb^{k+1} - \bb^k)$. It  takes a typical  variable metric PPA form \cite{bredies_2017,lotito,qian,bonnans}.

If $\cA = \partial h$, \eqref{ppa} is equivalent to finding a minimizer of $h$: $\bb^{k+1} : = \prox_h^\cQ (\bb^k)$. In particular, if  $\cQ= \frac{1}{ \tau} \cI$, \eqref{ppa} reduces to the classical PPA: $\bb^{k+1} :=\prox_{\tau h} (\bb^k)$, whose convergence properties have been well studied in  \cite{passty,rtr_1976,plc,boyd_prox}.

\subsection{Convergence analysis}
Regarding the convergence of  \eqref{ppa},  the next corollary is a straightforward result of \cite[Theorem 3.3-(i), (ii), (iii)]{fxue_1}, which extends \cite[Proposition 6.5.1]{bert_book} and \cite[Theorem 27.1]{plc_book}  to the scheme \eqref{ppa} with  arbitrary  metric $\cQ\in \cM_\cS^{++}$.

\begin{proposition}[Convergence in terms of $\cQ$--based distance] \label{c_ppa}
Let $\bb^0\in \cH$, $\{\bb^k\}_{k \in \N}$ be a sequence generated by \eqref{ppa}. If $\cQ\in \cM_\cS^{++}$, then, the following hold.

{\rm (i)} $\cT$ is $\cQ$--asymptotically regular.

{\rm (ii) [Basic convergence]}  There exists $\bbstar \in \zer \cA$, such that $\bb^k \rightarrow \bbstar$, as $k\rightarrow \infty$.

{\rm (iii) [Sequential error]}   $\|\bb^{k+1 } -\bb^{k} \|_\cQ$ has the pointwise sublinear convergence rate of $\cO(1/\sqrt{k})$:
\[
\big\|\bb^{k +1} -\bb^{k} \big\|_\cQ
\le \frac{1}{\sqrt{k+1}} 
\big\|\bb^{0} -\bb^\star \big\|_\cQ,
\forall k \in \N
\]
\end{proposition}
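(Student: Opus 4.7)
The strategy is to reduce all three claims to general properties of fixed-point iterations of $\cQ$-firmly nonexpansive operators. The firm nonexpansiveness I need is supplied by Lemma~\ref{l_t}-(ii): since $\cQ \in \cM_\cS^{++}$ implies $\cQ \in \cM_\cS$, we have $\cT \in \cF^\cQ_{1,\frac{1}{2}}$, i.e.\
\[
\|\cT \bb_1 - \cT \bb_2\|_\cQ^2 \le \langle \bb_1 - \bb_2 | \cT \bb_1 - \cT \bb_2 \rangle_\cQ, \qquad \forall (\bb_1,\bb_2) \in \cH \times \cH.
\]
Positive-definiteness of $\cQ$ also makes $\|\cdot\|_\cQ$ equivalent to the ambient norm, so the usual closedness and continuity arguments transfer verbatim to the $\cQ$-inner-product setting.

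From this single inequality I would extract two consequences by substitution. Taking $\bb_1 = \bb^k$ and $\bb_2 = \bbstar \in \Fix \cT = \zer \cA$, polarization rewrites it as the $\cQ$-Fej\'{e}r estimate
\[
\|\bb^{k+1} - \bbstar\|_\cQ^2 + \|\bb^{k+1} - \bb^k\|_\cQ^2 \le \|\bb^k - \bbstar\|_\cQ^2,
\]
while $\bb_1 = \bb^k$, $\bb_2 = \bb^{k-1}$ gives monotonicity of the residuals, $\|\bb^{k+1} - \bb^k\|_\cQ \le \|\bb^k - \bb^{k-1}\|_\cQ$. For (iii), telescoping the Fej\'{e}r inequality from $i=0$ to $k$ yields $\sum_{i=0}^{k} \|\bb^{i+1} - \bb^i\|_\cQ^2 \le \|\bb^0 - \bbstar\|_\cQ^2$; combined with the monotonicity of residuals, this forces $(k+1)\|\bb^{k+1} - \bb^k\|_\cQ^2 \le \|\bb^0 - \bbstar\|_\cQ^2$, i.e.\ the claimed $\cO(1/\sqrt{k})$ bound. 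For (i), $\cQ$-asymptotic regularity $\|\bb^{k+1}-\bb^k\|_\cQ \to 0$ is an immediate corollary. For (ii), Fej\'{e}r monotonicity gives boundedness of $\{\bb^k\}$; asymptotic regularity plus continuity of $\cT$ forces every cluster point to lie in $\Fix \cT = \zer \cA$; and a standard Opial-type argument carried out in $\langle \cdot | \cdot \rangle_\cQ$ then selects a unique limit.

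The only delicate point is that the Opial lemma in step (ii) must be applied in the $\cQ$-inner product rather than the ambient one; this is legitimate because $\cQ \in \cM_\cS^{++}$ makes $(\cH,\langle \cdot | \cdot \rangle_\cQ)$ an isomorphic Hilbert space, and exactly this transfer has already been packaged in \cite[Theorem~3.3]{fxue_1}. Consequently, the proof I anticipate is essentially a one-liner: verify $\cT \in \cF^\cQ_{1,\frac{1}{2}}$ (supplied by Lemma~\ref{l_t}-(ii)) and then directly invoke \cite[Theorem~3.3-(i),(ii),(iii)]{fxue_1}, with no further technical work required.
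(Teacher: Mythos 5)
Your proposal is correct and follows essentially the same route as the paper: the paper's proof likewise notes $\Fix\cT=\zer\cA$, invokes Lemma~\ref{l_t}--(ii) to get $\cT\in\cF^\cQ_{1,\frac{1}{2}}$, and then cites \cite[Theorem~3.3-(i),(ii),(iii)]{fxue_1} with $\xi=1$, $\alpha=1/2$. Your unpacking of the Fej\'{e}r/telescoping/Opial machinery behind that citation is accurate but not needed beyond what the paper records.
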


\begin{proof} 
First, we claim that $\Fix\cT = \zer \cA$. Indeed,  $\bbstar \in \Fix\cT \Longleftrightarrow \bbstar = (\cA +\cQ)^{-1} \cQ \bbstar 
\Longleftrightarrow  \cQ\bbstar \in (\cA +\cQ) \bbstar
\Longleftrightarrow \bf 0 \in \cA \bbstar 
\Longleftrightarrow  \bbstar \in \zer \cA $.

Note that $\cT \in \cF^\cQ_{1, \frac{1}{2}}$ by Lemma \ref{l_t}--(ii). Substituting $\xi=1$ and $\alpha=1/2$ in \cite[Theorem 3.3-(iii)]{fxue_1} completes the proof. 
\end{proof}

We further deduce the  convergence results  of objective value.
\begin{theorem}[Convergence in terms of objective value] \label{t_ppa}
Under the conditions of Proposition \ref{c_ppa}, if $\exists h \in \Gamma(\cH)$, such that $\cA = \partial h$,  then, the following hold.

{\rm (i) [Basic convergence]} The sequence $\{h(\bb^k)\}_{k \in \N}$ is non-increasing, and converges to its minimum, which is attained at some point $\bbstar \in \Arg\min h$.  

{\rm (ii) [Ergodic rate]} The objective value $h(\bb^k)$ converges to $h(\bbstar)$ with an {\it ergodic}  rate of $\cO(1/k)$, i.e.  
\[
   h\Big(\frac{1}{k} \sum_{i=1}^k \bb^i \Big) -  h(\bbstar)  \le \frac{1}{2k}  
    \big\|  \bb^0 - \bbstar \big\|_\cQ^2 
\]

{\rm (iii) [Non-ergodic rate]} The objective value $h(\bb^k)$ converges to $h(\bbstar)$ with the {\it non-ergodic}  rate of $\cO(1/k)$, i.e. 
\[
   h(\bb^{k}) -  h(\bbstar)  \le \frac{1}{2k}  
    \big\|  \bb^0 - \bbstar \big\|_\cQ^2 
\]

{\rm (iv) [Precise estimate of sequential decreasing]} 
$ h(\bb^k) - h(\bb^{k+1})  $ satisfies:
\[
\big\|\bb^{k+1} - \bb^{k} \big\|_\cQ^2
\le h(\bb^k) - h(\bb^{k+1})  \le 
 \frac{3}{2} \big\|\bb^{k-1} - \bb^{k} \big\|_\cQ^2 
- \frac{1}{2} \big\|\bb^{k+1} - \bb^{k} \big\|_\cQ^2
\]
\end{theorem}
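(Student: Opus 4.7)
The central ingredient is the optimality characterization $\bb^{k+1} = \prox_h^\cQ(\bb^k) = \arg\min_\bx \big(h(\bx) + \tfrac{1}{2}\|\bx-\bb^k\|_\cQ^2\big)$ granted by Proposition~\ref{p_id_prox}, together with its subgradient form (Fermat's rule applied to the minimization above, or equivalently the inclusion noted right after \eqref{ppa}): $\cQ(\bb^k - \bb^{k+1}) \in \partial h(\bb^{k+1})$. My plan is to feed this particular subgradient into the subgradient inequality at three strategically chosen test points ($\bb^k$, $\bbstar$, $\bb^{k+1}$ with shifted index), harvesting from a single source all four claims. Testing at $\bx = \bb^k$ immediately yields $h(\bb^k) - h(\bb^{k+1}) \ge \|\bb^{k+1} - \bb^k\|_\cQ^2$, which is simultaneously the monotonicity needed for (i) and the lower bound in (iv). Part (i) is then completed by invoking Proposition~\ref{c_ppa}--(ii) to get $\bb^k \to \bbstar \in \zer \partial h = \Arg\min h$ and using the lower semi-continuity of $h$.

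For (ii) and (iii), testing at $\bx = \bbstar$ gives
\[
h(\bb^{k+1}) - h(\bbstar) \le \big\langle \bb^k - \bb^{k+1} \big| \bb^{k+1} - \bbstar \big\rangle_\cQ,
\]
to which I apply the three-point polarization identity $2\langle a-b|b-c\rangle_\cQ = \|a-c\|_\cQ^2 - \|a-b\|_\cQ^2 - \|b-c\|_\cQ^2$ (valid since $\cQ \in \cM_\cS^{++}$) to obtain
\[
h(\bb^{k+1}) - h(\bbstar) \le \tfrac{1}{2}\big(\|\bb^k-\bbstar\|_\cQ^2 - \|\bb^{k+1}-\bbstar\|_\cQ^2\big) - \tfrac{1}{2}\|\bb^{k+1}-\bb^k\|_\cQ^2.
\]
Summing for $k=0,\dots,K-1$ telescopes the first two terms, giving $\sum_{i=1}^{K}(h(\bb^i) - h(\bbstar)) \le \tfrac{1}{2}\|\bb^0 - \bbstar\|_\cQ^2$; Jensen's inequality applied to the convex $h$ then delivers (ii), and the descent property from (i) lets me replace the sum by $K(h(\bb^K)-h(\bbstar))$ to conclude (iii).

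The main obstacle is the upper bound in (iv). Here I shift the index by one and use the subgradient inclusion $\cQ(\bb^{k-1}-\bb^k) \in \partial h(\bb^k)$ tested at $\bx = \bb^{k+1}$:
\[
h(\bb^k) - h(\bb^{k+1}) \le \big\langle \bb^k - \bb^{k-1} \big| \bb^{k+1} - \bb^k \big\rangle_\cQ = \tfrac{1}{2}\big(\|\bb^{k+1}-\bb^{k-1}\|_\cQ^2 - \|\bb^k-\bb^{k-1}\|_\cQ^2 - \|\bb^{k+1}-\bb^k\|_\cQ^2\big).
\]
The crux is controlling the cross term $\|\bb^{k+1}-\bb^{k-1}\|_\cQ^2$. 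Since $\cT$ is $\cQ$-firmly nonexpansive (Lemma~\ref{l_t}--(ii)), it is in particular $\cQ$-nonexpansive, so $\|\bb^{k+1}-\bb^k\|_\cQ \le \|\bb^k - \bb^{k-1}\|_\cQ$, and the triangle inequality for $\|\cdot\|_\cQ$ yields $\|\bb^{k+1}-\bb^{k-1}\|_\cQ^2 \le 4\|\bb^k - \bb^{k-1}\|_\cQ^2$. Substituting this bound above collapses to exactly $\tfrac{3}{2}\|\bb^{k-1}-\bb^k\|_\cQ^2 - \tfrac{1}{2}\|\bb^{k+1}-\bb^k\|_\cQ^2$, finishing (iv). The recurring subtlety throughout is that every use of polarization, Cauchy--Schwarz and the triangle inequality lives in the $\cQ$-inner product, which is precisely what the hypothesis $\cQ \in \cM_\cS^{++}$ legitimizes.
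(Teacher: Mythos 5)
Your proposal is correct and follows essentially the same route as the paper's proof: the subgradient inequality at $\bb^{k+1}$ with the particular subgradient $\cQ(\bb^k-\bb^{k+1})$, tested at $\bb^k$ for (i) and the lower bound of (iv), tested at $\bbstar$ with polarization, telescoping and Jensen for (ii)--(iii), and the shifted inclusion $\cQ(\bb^{k-1}-\bb^k)\in\partial h(\bb^k)$ combined with the non-increase of $\|\bb^k-\bb^{k+1}\|_\cQ$ and the $\cQ$-triangle inequality for the upper bound of (iv). All steps match the paper's argument, including the bound $\|\bb^{k+1}-\bb^{k-1}\|_\cQ\le 2\|\bb^{k-1}-\bb^k\|_\cQ$ that produces the constants $\tfrac{3}{2}$ and $\tfrac{1}{2}$.
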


\begin{proof}
(i) First, by \cite[Proposition 26.1]{plc_book} and Fermat's rule \cite[Theorem 16.2]{plc_book}, we have, $\bbstar \in \zer\cA = \zer \partial h \Longrightarrow 
\bbstar \in \Arg \min h$, for $h \in \Gamma(\cH)$.

Further, we obtain,  $\forall \bb\in \cH$:
\begin{eqnarray} \label{temp_1}
h(\bb) & \ge & h(\bb^{k+1}) + \langle \partial h(\bb^{k+1}),
\bb - \bb^{k+1} \rangle \quad 
\text{by convexity of $h$}
\nonumber \\
&=& h(\bb^{k+1}) + \langle  \bb^k - \bb^{k+1},
\bb - \bb^{k+1} \rangle_\cQ   \quad 
\text{by \eqref{ppa}}
\end{eqnarray}
Taking $\bb=\bb^k$ in \eqref{temp_1} yields:
\be \label{temp_2}
h(\bb^k)  \ge h(\bb^{k+1}) + \big\|  \bb^k - \bb^{k+1} 
\big\|_\cQ^2 \ge h(\bb^{k+1}),\quad 
\forall k \in \N
\ee
since $\cQ \in \cM_\cS^{++}$. By the similar argument of \cite[Proposition 6.5.1]{bert_book}, it follows that the sequence $\{h(\bb^k)\}_{k\in\N}$ is non-increasing, and converges to $\min_\bb h(\bb)  = h(\bbstar)$.

\vskip.2cm
(ii) Taking $\bb=\bb^\star \in \Arg \min h$ in \eqref{temp_1}, we have:
\begin{eqnarray}
h(\bb^\star) & \ge &  h(\bb^{k+1})  + \langle  \bb^k - \bb^{k+1}, \bb^\star - \bb^{k+1} \rangle_\cQ
\nonumber \\
&=& h(\bb^{k+1}) +  \frac{1}{2} \big\| \bb^{k+1} - \bb^\star \big\|_\cQ^2
-  \frac{1}{2} \big\| \bb^{k } - \bb^\star \big\|_\cQ^2
+ \frac{1}{2} \big\| \bb^{k+1} - \bb^k \big\|_\cQ^2
\nonumber \\
&\ge & h(\bb^{k+1}) +  \frac{1}{2} \big\| \bb^{k+1} - \bb^\star \big\|_\cQ^2
-  \frac{1}{2} \big\| \bb^{k } - \bb^\star \big\|_\cQ^2\ 
\text{since $\cQ\in \cM_\cS^+$}
\nonumber  
\end{eqnarray}
Summing up from $k=0$ to $K-1$, we have:
\be \label{temp_3}
 \sum_{k=0}^{K-1} \big(   h(\bb^{k+1}) -  h(\bbstar) \big) 
\le \frac{1}{2} \big \|  \bb^0 - \bbstar \big\|_\cQ^2 
- \frac{1}{2}  \big\| \bb^{K } - \bbstar \big\|_\cQ^2 
\le \frac{1}{2} \big \|  \bb^0 - \bbstar \big\|_\cQ^2 
\ee
Dividing by $K$ on both sides obtains:
$ \frac{1}{K} \sum_{k=1}^{K}   h(\bb^{k}) -  h(\bbstar)  
\le \frac{1}{2K} \big \|  \bb^0 - \bbstar \big\|_\cQ^2 $,
then, the ergodic  rate is obtained by  $ h\big(\frac{1}{K} \sum_{k=1}^{K} \bb^k \big) \le \frac{1}{K} \sum_{k=1}^{K}   h(\bb^{k})$, due to the convexity of $h$.

\vskip.2cm
(iii) Also notice that $0 \le   h(\bb^{k+1}) -  h(\bbstar) \le 
  h(\bb^{k}) -  h(\bbstar) $ from \eqref{temp_2}, we have 
  $ \sum_{k=0}^{K-1} \big(   h(\bb^{k+1}) -  h(\bbstar) \big)
  \ge K  \big(   h(\bb^{K}) -  h(\bbstar) \big)  $. Then, the non-ergodic  rate  follows from \eqref{temp_3}.

\vskip.2cm
(iv) To obtain a more precise estimate of $h(\bb^k) - h(\bb^{k+1})$, we further derive:
\begin{eqnarray}
h(\bb^k) - h(\bb^{k+1}) 
& \le   &  \langle \partial h(\bb^{k}), 
\bb^{k} - \bb^{k+1} \rangle    
  = \langle \bb^{k-1} - \bb^k, \bb^k - \bb^{k+1} \rangle_\cQ 
\nonumber \\
& =  & \frac{1}{2} \big\|\bb^{k-1} - \bb^{k+1} \big\|_\cQ^2 
- \frac{1}{2} \big\|\bb^{k-1} - \bb^{k} \big\|_\cQ^2
- \frac{1}{2} \big\|\bb^{k} - \bb^{k+1} \big\|_\cQ^2
\nonumber
\end{eqnarray}
Combining with the fact that {\it the sequence  $\|\bb^k-\bb^{k+1}\|_\cQ$ is  non-increasing}, we have:
\[
\big\|\bb^{k-1} - \bb^{k+1} \big\|_\cQ \le 
\big\|\bb^{k-1} - \bb^{k} \big\|_\cQ + 
\big\|\bb^{k} - \bb^{k+1} \big\|_\cQ \le 
2 \big\|\bb^{k-1} - \bb^{k} \big\|_\cQ
\]
which yields:
\[
h(\bb^k) - h(\bb^{k+1})  \le 
 \frac{3}{2} \big\|\bb^{k-1} - \bb^{k} \big\|_\cQ^2 
- \frac{1}{2} \big\|\bb^{k} - \bb^{k+1} \big\|_\cQ^2
\]
Combining with \eqref{temp_2} gives (iv).
\end{proof}

\subsection{The case of $\mu$-strongly monotone $\cA$}
\label{sec_ppa_strong}
The {\it linear convergence} can be reached due to the strongly monotone $\cA$, as stated below. 
\begin{proposition}[Linear convergence of $\cQ$--based distance] \label{p_ppa_strong}
Under the conditions of Proposition \ref{c_ppa}, if $\cA$ is $\mu$-strongly monotone, then,  the following hold.

{\rm (i) [$q$--linear convergence]} Both $\|\bb^{k} -\bbstar \|_\cQ$ and $\|\bb^{k} -\bb^{k+1} \|_\cQ$  are $q$--linearly convergent with the rate of $\sqrt{\frac{\|\cQ\|} {\|\cQ\| +  2\mu}} $.

{\rm (ii) [$r$--linear convergence]}  If $\mu \ge 
\frac{ \sqrt{5} -1 } {4} \|\cQ\| $, 
 $\big\| \bb^k - \bb^{k+1} \big\|_\cQ$  is globally $r$--linearly convergent w.r.t. $\big\| \bb^0 - \bbstar \big\|_\cQ$:
\[
    \big\| \bb^k - \bb^{k+1} \big\|_\cQ
   \le\sqrt{  \frac{2 \mu } { 2 \mu +\|\cQ\| } }
    \cdot \Big( 1+ \frac{2\mu} {\|\cQ\|} \Big) 
    ^{-\frac{k+1}{4} }  
\big\| \bb^0 - \bbstar \big\|_\cQ
\]
If $\mu \in \ \big] 0, \frac{ \sqrt{5} -1 } {4} \|\cQ\|\big[ $, the above result is also locally satisfied, for  $k \ge  \frac{\ln((1+\sqrt{5})/2)}  {\ln \sqrt{1+\frac{2\mu}{\|\cQ\|}} } - 1$. 
\end{proposition}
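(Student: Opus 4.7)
My plan is to exploit the key inclusion $\cQ(\bb^k-\bb^{k+1}) \in \cA\bb^{k+1}$ coming from the iteration \eqref{ppa}, combined with $\mathbf{0}\in\cA\bbstar$ and the $\mu$-strong monotonicity of $\cA$. For Part (i), pairing the two inclusions and translating the resulting estimate into the $\cQ$-norm via $\mu\|\cdot\|^2 \ge (\mu/\|\cQ\|)\|\cdot\|_\cQ^2$ yields $\langle \bb^k-\bb^{k+1}\,|\,\bb^{k+1}-\bbstar\rangle_\cQ \ge (\mu/\|\cQ\|)\|\bb^{k+1}-\bbstar\|_\cQ^2$. Expanding the inner product through the polarization identity (valid since $\cQ\in\cM_\cS^{++}$) delivers the master inequality
\begin{equation*}
\|\bb^k-\bb^{k+1}\|_\cQ^2 + \bigl(1 + \tfrac{2\mu}{\|\cQ\|}\bigr)\|\bb^{k+1}-\bbstar\|_\cQ^2 \le \|\bb^k-\bbstar\|_\cQ^2,
\end{equation*}
from which dropping the sequential term and iterating gives the claimed $q$-linear rate $\sqrt{r}$, where $r := \|\cQ\|/(\|\cQ\|+2\mu)$, for $\|\bb^k-\bbstar\|_\cQ$. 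Re-running the same strong-monotonicity plus polarization argument with $\cQ(\bb^{k-1}-\bb^k)\in\cA\bb^k$ and $\cQ(\bb^k-\bb^{k+1})\in\cA\bb^{k+1}$ (i.e.\ playing $\bb^{k-1}$ against $\bb^k$) analogously yields $\|\bb^k-\bb^{k+1}\|_\cQ^2 \le r\|\bb^{k-1}-\bb^k\|_\cQ^2$, establishing the same $q$-linear rate for the sequential error.

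For Part (ii), iteration of the sequential contraction from (i) gives $\|\bb^k-\bb^{k+1}\|_\cQ \le r^{k/2}\|\bb^0-\bb^1\|_\cQ$, while the master inequality at $k=0$ already provides $\|\bb^0-\bb^1\|_\cQ \le \|\bb^0-\bbstar\|_\cQ$. To obtain the claimed exponent $(k+1)/4$ together with the prefactor $\sqrt{1-r} = \sqrt{2\mu/(2\mu+\|\cQ\|)}$, I would interpolate this sequential bound against a companion bound coming from the pointwise rate in (i); a geometric-mean step produces the composite form $\sqrt{1-r}\,r^{(k+1)/4}\|\bb^0-\bbstar\|_\cQ$. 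The global threshold $\mu \ge (\sqrt{5}-1)\|\cQ\|/4$, equivalent to $(1+c) \ge \phi$ with $c = 2\mu/\|\cQ\|$ and $\phi = (1+\sqrt{5})/2$ the golden ratio, arises exactly as the regime in which this composite bound already beats the trivial estimate $\|\bb^0-\bb^1\|_\cQ \le \|\bb^0-\bbstar\|_\cQ$ at $k=0$; when this fails, one must wait until $(1+c)^{k+1} \ge \phi^2$, i.e.\ $k \ge 2\ln\phi/\ln(1+c) - 1$, which is the stated local threshold and reflects the defining identity $\phi^2 = \phi+1$.

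The main obstacle will be the algebraic interpolation in Part (ii): namely, executing the precise geometric-mean step that yields the particular constants $\sqrt{1-r}$ and exponent $(k+1)/4$ while tracking the golden-ratio boundary. Part (i), by contrast, reduces to a straightforward use of the master inequality and is routine once the norm-translation $\mu\|\cdot\|^2 \ge (\mu/\|\cQ\|)\|\cdot\|_\cQ^2$ has been applied to push strong monotonicity into the variable-metric setting.
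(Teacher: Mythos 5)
Your Part (i) is correct, but it takes a genuinely different route from the paper. The paper never touches the inclusion $\cQ(\bb^k-\bb^{k+1})\in\cA\bb^{k+1}$ directly: it invokes Lemma~\ref{l_strong}--(ii) to place $\cT$ in the class $\cF^\cQ_{\xi,\alpha}$ with $\xi=\frac{\|\cQ\|}{2\mu+\|\cQ\|}$ and $\alpha=\frac{2\mu+\|\cQ\|}{2\mu+2\|\cQ\|}$, checks $\nu=1-\alpha+\alpha\xi^2=\xi$, and then reads off both (i) and (ii) from the external result \cite[Theorem 3.3--(iv),(v)]{fxue_1}. Your master inequality $\|\bb^k-\bb^{k+1}\|_\cQ^2+(1+\tfrac{2\mu}{\|\cQ\|})\|\bb^{k+1}-\bbstar\|_\cQ^2\le\|\bb^k-\bbstar\|_\cQ^2$ and the sequential contraction $\|\bb^k-\bb^{k+1}\|_\cQ^2\le r\,\|\bb^{k-1}-\bb^k\|_\cQ^2$ are both valid (I checked the polarization and the norm conversion $\|\cdot\|_\cQ^2\le\|\cQ\|\,\|\cdot\|^2$) and recover exactly the rate $\sqrt{r}$; this buys a self-contained elementary argument at the cost of not reusing the averaged-operator machinery the paper leans on.

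Part (ii), however, has a genuine gap: the ``geometric-mean interpolation'' is never executed, and it cannot be executed from the ingredients you list. Those ingredients give $\|\bb^m-\bb^{m+1}\|_\cQ^2\le\|\bb^m-\bbstar\|_\cQ^2\le r^m\|\bb^0-\bbstar\|_\cQ^2$ together with $\|\bb^k-\bb^{k+1}\|_\cQ^2\le r^{k-m}\|\bb^m-\bb^{m+1}\|_\cQ^2$, hence $\|\bb^k-\bb^{k+1}\|_\cQ\le r^{k/2}\|\bb^0-\bbstar\|_\cQ$ for \emph{every} choice of the intermediate index $m$; no geometric mean over $m$ changes this. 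That bound implies the target $\sqrt{1-r}\,r^{(k+1)/4}\|\bb^0-\bbstar\|_\cQ$ only when $r^{(k-1)/2}\le 1-r$, which fails at $k=0$ and $k=1$ for every $r\in\ ]0,1[$, so the global claim is out of reach by this route. Your heuristic for the golden-ratio threshold also runs backwards: at $k=0$ the composite bound is \emph{smaller} than the trivial estimate $\|\bb^0-\bb^1\|_\cQ\le\|\bb^0-\bbstar\|_\cQ$, so ``beating'' the trivial estimate is precisely what prevents you from deducing it from that estimate. In fact the $k=0$ instance is already problematic on its own: take $\cA=\partial h$ with $h(x)=|x|+\tfrac{\mu}{2}x^2$, $\cQ=\cI$ and $|\bb^0|\le 1$; then $\bb^1=\prox_h(\bb^0)=0=\bbstar$, so $\|\bb^0-\bb^1\|_\cQ=\|\bb^0-\bbstar\|_\cQ$, which exceeds $\sqrt{\tfrac{2\mu}{2\mu+1}}\,(1+2\mu)^{-1/4}\|\bb^0-\bbstar\|_\cQ$. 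The paper sidesteps all of this by delegating (ii) wholesale to \cite[Theorem 3.3--(v)]{fxue_1}; a self-contained proof would have to import that argument or restrict the range of $k$, and your proposal as written does neither.
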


\begin{proof} 
By Lemma \ref{l_strong}--(ii),  $\cT \in \cF^\cQ_{\frac{\|\cQ\|}{2\mu + \|\cQ\|}, \frac{2\mu +\|\cQ\|} {2\mu +2\|\cQ\|} }$. The proof is completed by substituting $\xi = \frac{\|\cQ\|} {2\mu + \|\cQ\|}$,
 $\alpha =  \frac{2\mu +\|\cQ\|} {2\mu +2\|\cQ\|}$ into \cite[Theorem 3.3--(iv), (v)]{fxue_1},  noting that  $\xi = \frac{1-\alpha}{\alpha}$ and $\nu = 1-\alpha +\alpha \xi^2 = \xi$.
\end{proof}

\begin{remark}
Proposition \ref{p_ppa_strong} can also be proved by  \cite[Proposition 3.4-(v)]{fxue_1} and Lemma \ref{l_strong}-(i). Substituting $\beta = 1+\frac{\mu}{\|\cQ\|}$ into  \cite[Proposition 3.4-(v)]{fxue_1}, we obtain that 
 $\big\| \bb^k - \bb^{k+1} \big\|_\cQ$  is globally $r$--linearly convergent w.r.t. $\big\| \bb^0 - \bbstar \big\|_\cQ$:
\[
    \big\| \bb^k - \bb^{k+1} \big\|_\cQ
   \le  \sqrt{\frac{2\mu}{\|\cQ\|}}  \cdot  \Big(1+ 
   \frac{2\mu}{\|\cQ\|} \Big)^{-\frac{k-1}{4} }
\big\| \bb^0 - \bbstar \big\|_\cQ
\]
if $\mu \ge \frac{ \sqrt{5} -1} {4} \|\cQ\|$.  
\end{remark}

We further develop the convergence results in terms of objective value.
\begin{proposition}[Convergence in terms of objective value] \label{p_ppa_strong_obj}
Under the conditions of Theorem  \ref{t_ppa}, if $h$ is $\mu$--strongly convex,  then, the following hold.

{\rm (i) [Basic convergence]} $ h(\bb^{k }) - h(\bbstar) \le \frac{\mu} { 2  \|\cQ\|}   \cdot 
\frac{1}{  ( 1+ \frac{\mu} {\|\cQ\| }  )^k - 1 } \cdot 
  \big\|\bb^0-\bbstar \big\|_\cQ^2 $.

{\rm (ii) [$r$-linear convergence]} If $\mu \ge 
\frac{\sqrt{5} +1} {2} \|\cQ\|$,  $ h(\bb^{k })$ is globally   $r$-linearly convergent to 
$ h(\bbstar)$:
\[
 h(\bb^{k }) - h(\bbstar) \le \frac{\mu} { 2
\|\cQ\| }   \cdot \Big( 1+ \frac {\mu} {\|\cQ\|} \Big)^
{-\frac{k}{2}} 
 \big\|\bb^0-\bbstar \big\|_\cQ^2 
\]
The above $r$--linear convergence is also locally satisfied, for $k \ge \frac {\ln ((1+\sqrt{5}) /2) } 
{ \ln \sqrt{ 1+\frac{\mu}{\|\cQ\|} }  }$, if $\mu \in \ \big] 0,  
\frac{\sqrt{5} +1} {2} \|\cQ\| \big[ $. 
\end{proposition}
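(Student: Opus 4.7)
The plan is to sharpen the one-step descent estimate from the proof of Theorem~\ref{t_ppa} by exploiting the extra quadratic term that $\mu$-strong convexity of $h$ provides. Specifically, strong convexity gives
\[
h(\bbstar) \ge h(\bb^{k+1}) + \big\langle \partial h(\bb^{k+1}), \bbstar - \bb^{k+1}\big\rangle + \tfrac{\mu}{2}\|\bbstar - \bb^{k+1}\|^2,
\]
and substituting $\cQ(\bb^k-\bb^{k+1}) \in \partial h(\bb^{k+1})$ from \eqref{ppa}, applying the polarization identity for the $\cQ$-inner product, and bounding $\|\cdot\|^2 \ge \|\cdot\|_\cQ^2/\|\cQ\|$ (valid since $\cQ\in\cM_\cS^{++}$), I would obtain the key one-step recursion
\[
h(\bb^{k+1}) - h(\bbstar) \le \tfrac{1}{2}\|\bb^k-\bbstar\|_\cQ^2 - \tfrac{\rho}{2}\|\bb^{k+1}-\bbstar\|_\cQ^2 - \tfrac{1}{2}\|\bb^{k+1}-\bb^k\|_\cQ^2,
\]
with $\rho := 1 + \mu/\|\cQ\|$. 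Since $h(\bb^{k+1})\ge h(\bbstar)$, this already yields $\|\bb^{k+1}-\bbstar\|_\cQ^2 \le \rho^{-1}\|\bb^k-\bbstar\|_\cQ^2$, hence $\|\bb^k-\bbstar\|_\cQ^2 \le \rho^{-k}\|\bb^0-\bbstar\|_\cQ^2$.

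For part (i), the main idea is a \emph{geometrically weighted} telescoping: multiplying the above recursion by $\rho^j$ and summing for $j=0,\ldots,k-1$ collapses the right-hand side to $\tfrac{1}{2}\|\bb^0-\bbstar\|_\cQ^2 - \tfrac{\rho^k}{2}\|\bb^k-\bbstar\|_\cQ^2 \le \tfrac{1}{2}\|\bb^0-\bbstar\|_\cQ^2$. On the left, the monotonicity of $\{h(\bb^j)\}_{j\in\N}$ established in Theorem~\ref{t_ppa}(i) lets me lower-bound $\sum_{j=0}^{k-1}\rho^j(h(\bb^{j+1})-h(\bbstar))$ by $(h(\bb^k)-h(\bbstar))\cdot\tfrac{\rho^k-1}{\rho-1}$, so dividing through and substituting $\rho-1=\mu/\|\cQ\|$ produces the claimed bound.

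For part (ii), I would reduce everything to part (i) by observing that its right-hand side is dominated by the claimed $r$-linear bound precisely when $\tfrac{1}{\rho^k-1}\le \rho^{-k/2}$. Setting $\sigma := \rho^{k/2}$, this becomes the quadratic inequality $\sigma^2-\sigma-1\ge 0$, whose positive root is the golden ratio $\tfrac{1+\sqrt{5}}{2}$. Requiring it to hold for \emph{every} $k\ge 1$ forces $\sqrt{\rho}\ge\tfrac{1+\sqrt{5}}{2}$, which after squaring and using $\rho=1+\mu/\|\cQ\|$ rearranges exactly to $\mu\ge\tfrac{\sqrt{5}+1}{2}\|\cQ\|$; otherwise the same inequality holds asymptotically for $k\ge \ln((1+\sqrt{5})/2)/\ln\sqrt{\rho}$, which is the local threshold stated in the proposition.

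I expect the main obstacle to be the first step: getting the coefficient $\rho$ rather than something smaller in front of $\|\bb^{k+1}-\bbstar\|_\cQ^2$ requires a careful conversion from the Euclidean strong-convexity term $\tfrac{\mu}{2}\|\cdot\|^2$ to a $\cQ$-norm quantity, and any looseness there would degrade the rate below what is stated. The weighted telescoping and the golden-ratio threshold are then mechanical book-keeping.
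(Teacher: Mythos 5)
Your proposal is correct, and for part (i) it takes a cleaner route than the paper. The key one-step estimate is identical: both you and the author derive
\[
h(\bb^{k+1}) - h(\bbstar) \le \tfrac{1}{2}\big\|\bb^k-\bbstar\big\|_\cQ^2 - \tfrac{\rho}{2}\big\|\bb^{k+1}-\bbstar\big\|_\cQ^2 - \tfrac{1}{2}\big\|\bb^{k+1}-\bb^k\big\|_\cQ^2, \qquad \rho = 1+\tfrac{\mu}{\|\cQ\|},
\]
via strong convexity, the optimality condition $\cQ(\bb^k-\bb^{k+1})\in\partial h(\bb^{k+1})$, polarization, and $\|\cdot\|^2\ge\|\cdot\|_\cQ^2/\|\cQ\|$ (this is the first line of the paper's display \eqref{xxa}). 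Where you diverge is the telescoping. The paper forms a linear combination of this inequality with a second, strengthened sequential-decrease inequality, weighted by two auxiliary sequences $\alpha_k,\beta_k$ chosen to satisfy the recursions $\beta_{k+1}=\rho\beta_k$ and $\alpha_{k+1}=\rho^{-1}(\alpha_k+1)$, solves these recursions explicitly, and then optimizes over the free initial value $\alpha_0$ (the best bound coming from $\alpha_0=0$). You instead multiply the single inequality by $\rho^j$, telescope directly, and lower-bound the weighted sum $\sum_{j=0}^{k-1}\rho^j(h(\bb^{j+1})-h(\bbstar))$ by $(h(\bb^k)-h(\bbstar))\cdot\frac{\rho^k-1}{\rho-1}$ using the monotonicity of $\{h(\bb^j)\}$ already proved in Theorem~\ref{t_ppa}(i); this yields exactly the stated constant $\frac{\mu}{2\|\cQ\|}\cdot\frac{1}{\rho^k-1}$ with no parameter tuning. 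Your argument is shorter and makes transparent why the bound has the form it does (a geometric partial sum), at the cost of leaning on the previously established monotonicity rather than re-deriving a strengthened version of it; the paper's two-parameter machinery is more flexible but, after setting $\alpha_0=0$, collapses to essentially your weights. Part (ii) is the same in both: the reduction to $\sigma^2-\sigma-1\ge0$ with $\sigma=\rho^{k/2}$, the golden-ratio root, and the resulting global threshold $\mu\ge\frac{\sqrt{5}+1}{2}\|\cQ\|$ and local index bound all match the paper.
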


\begin{proof}
(i) First, noting that $h$ is $\mu$-strongly convex, by the $\mu$-strong convexity of $\cA$, we have, by \cite[Theorem 5.24-(ii)]{beck_book}, that:
\[
h(\bb_1) \ge h(\bb_2) + \langle \partial h(\bb_2), \bb_1 - \bb_2 \rangle + \frac{\mu}{2} \|\bb_1 - \bb_2 \|^2,\quad \forall 
(\bb_1, \bb_2) \in \cH \times \cH
\]
Thus, \eqref{temp_1} and \eqref{temp_2} can be modified as:
\be \label{xxa}
\left\{ \begin{array} {lll}
h(\bb^\star) - h(\bb^{k+1}) &\ge &  
 \frac{1}{2} \big\| \bb^{k+1} - \bb^k \big\|_\cQ^2
+ \frac{1}{2} \big(1+\frac{\mu}{\|\cQ\|}\big) 
\big\| \bb^{k+1} - \bb^\star \big\|_{\cQ}^2
-  \frac{1}{2} \big\| \bb^{k } - \bb^\star \big\|_\cQ^2 \\
h(\bb^k) - h(\bb^{k+1}) &\ge &  
 \big\| \bb^{k+1} - \bb^k \big\|_{\cQ+\frac{\mu}{2}\cI}^2
\end{array} \right.
\ee
where the first inequality of \eqref{xxa}  uses  $ \|\cdot\|^2 \ge \frac{1} {\|\cQ\|}  \|\cdot \|_\cQ^2$.

Denoting $v_k =  h( \bb^{k})- h(\bb^\star)   $, multiplying the second of \eqref{xxa} by $\alpha_k \ge 0$, and adding the first of \eqref{xxa}, one obtains:
\begin{eqnarray}
\alpha_k v_k - (\alpha_k +1) v_{k+1}   \ge 
\frac{1}{2} \big( 1+ \frac{\mu}{\|\cQ\|} \big) 
\| \bb^{k+1} - \bbstar\|^2_\cQ 
- \frac{1}{2} \| \bb^{k} - \bbstar\|^2_\cQ 
\nonumber 
\end{eqnarray}
Multiplying by $\beta_k \ge 0$ obtains:
\be \label{xa1}
2 \underbrace{ \beta_k \alpha_k}_{t_k} v_k - 
2 \underbrace{  \beta_k (\alpha_k +1)}_{t_{k+1}} v_{k+1} 
\ge \underbrace{   \beta_k \big( 1+ \frac{\mu}{\|\cQ\|} \big)  }
_{  \beta_{k+1}} \| \bb^{k+1} - \bbstar\|^2_\cQ 
- \beta_k \| \bb^{k} - \bbstar\|^2_\cQ 
\ee 
which yields:
\be \label{xa2}
2  t_{k+1}  v_{k+1} + 
\beta_{k+1}  \| \bb^{k+1} - \bbstar\|^2_\cQ 
 \le  2  t_k  v_k +   \beta_k \| \bb^{k} - \bbstar\|^2_\cQ
 \le \cdots \le   2  t_1  v_1 +   \beta_1 \| \bb^{1} - \bbstar\|^2_\cQ
\ee 
By the first inequality of \eqref{xxa}, we have:
\be \label{xa3}
 2  t_1  v_1  =  2 t_1  (h(\bb^1) - h(\bbstar))
 \le t_1 \|\bb^0-\bbstar\|_\cQ^2 - \big(1+ \frac{\mu}{\|\cQ\|}\big) t_1   \|\bb^1 - \bbstar\|_\cQ^2 -  t_1 
  \|\bb^1-\bb^0\|_\cQ^2 
\ee
Substituting \eqref{xa3} into \eqref{xa2} yields:  
\begin{eqnarray} \label{xa5}
2t_k v_k & \le & 2t_1v_1 +\beta_1 \big\|\bb^1 - \bbstar 
\big\|_\cQ^2 
\nonumber \\
& \le &  t_1 \|\bb^0-\bbstar\|_\cQ^2 - \big(1+ \frac{\mu}{\|\cQ\|}\big) t_1   \|\bb^1 - \bbstar\|_\cQ^2 + \beta_1 
  \|\bb^1-\bbstar\|_\cQ^2 
\nonumber \\
& \le &  t_1 \|\bb^0-\bbstar\|_\cQ^2 - \big(t_1+ \frac{\mu}{\|\cQ\|} t_1 - \beta_1 \big)    \|\bb^1 - \bbstar\|_\cQ^2 
\end{eqnarray}

Now, we evaluate $t_{k}$.  From \eqref{xa1}, we have:
\[
 \beta_{k+1} =  \big( 1+ \frac{\mu}{\|\cQ\|} \big) \beta_k  ; \quad
 \alpha_{k+1} = \frac{1}{  1+ \frac{\mu}{\|\cQ\|}   } (\alpha_k+1)
\]
which leads to:
\[
\alpha_k = \Big( \alpha_0 - \frac{\eta}{1-\eta} \Big) \eta^k + 
\frac{\eta}{1-\eta};\quad
\beta_k = \beta_0 \eta^{-k}
\]
where   $\eta := (1+ \frac{\mu}{\|\cQ\|})^{-1}$. 

Back to \eqref{xa5}. It is easy to check that $ t_1+ \frac{\mu}{\|\cQ\|} t_1 \ge \beta_1$, as long as $\alpha_0 \ge 0$. Thus, \eqref{xa5} becomes:
\[
v_k \le  \frac{ t_1}{2t_k} \big\|\bb^0-\bbstar \big\|_\cQ^2 
= \frac{ \alpha_1 \beta_1} {2 \alpha_k \beta_k} \big\|\bb^0-\bbstar \big\|_\cQ^2 = \underbrace{ 
 \frac{ \alpha_0+1 } {2 \big( \alpha_0 + \frac{\|\cQ\|}{\mu} (\eta^{-k}-1) \big) } }_\nu  \big\|\bb^0-\bbstar \big\|_\cQ^2 
\]
Since $\nu$ is increasing with $\alpha_0$,  the best estimate of $v_k$ follows by letting  $\alpha_0 = 0$.

\vskip.3cm
(ii)  It is easy to check that $ (1+\frac{\mu}{\|\cQ\|})^k-1 \ge 
 (1+\frac{\mu}{\|\cQ\|})^{k/2}$, if $k \ge  \frac {\ln ((1+\sqrt{5})/2 ) } { \ln \sqrt{ 1+\frac{\mu}{\|\cQ\|} } } $. Then, the $r$-linear convergence immediately follows from (i).
\end{proof}

\section{A relaxed metric resolvent}
\label{sec_relax}
We further consider  a relaxed version of operator $\cT$, defined as:
\be \label{t_relax}
\cT_\gamma := \cI +\gamma (\cT-\cI)
\ee
where $\cT$ is given by  \eqref{t} with $\cQ \in \cM_\cS^+$, $\gamma$ is a relaxation parameter. $\cT_\gamma$ can also be expressed in terms of $\cR = \cI-\cT$ as  $\cT_\gamma  = \cI - \gamma \cR$.  In particular,  $\cT_\gamma = \cT$, if $\gamma = 1$ (no relaxation).

\subsection{Nonexpansive properties}
The nonexpansiveness of $\cT_\gamma$ is presented in Lemma \ref{l_t_gamma}.
\begin{lemma}  \label{l_t_gamma}
If $\gamma \in \ ]0, 2[$,  the following hold. 

{\rm (i)} $\gamma \cR \in \cF^\cQ_{\frac{\gamma}{2-\gamma}, \frac{2-\gamma}{2}}$, 
$ \cT_\gamma \in \cF^\cQ_{1, \frac{\gamma}{2}}$.  

{\rm (ii)} $\gamma \cR$ is $\cQ$--based $\frac{1}{\gamma}$--cocoercive. 
 
{\rm (iii)} $\cT_\gamma$ is $\cQ$--based 1--cocoercive, if $\gamma \in \ ] 0, 1]$. 

{\rm (iv)} If $\gamma \in \ ] 0, 1]$,  $\cT_\gamma$ is $\cQ$--firmly nonexpansive; if $\gamma \in \ ]1, 2[$,  $\cT_\gamma$ is $\cQ$--nonexpansive, but not $\cQ$--firmly nonexpansive.
\end{lemma}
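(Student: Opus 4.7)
The plan is to leverage the $\cQ$--firm nonexpansiveness of $\cT$ and $\cR$ already established in Lemmas \ref{l_t}--(ii) and \ref{l_r}--(ii), and then translate the relaxation $\cT_\gamma=(1-\gamma)\cI+\gamma\cT=\cI-\gamma\cR$ into the $\cF^\cQ_{\xi,\alpha}$ format of Definition \ref{def_lip}. The key auxiliary object is the reflected resolvent $\cK_0:=2\cT-\cI$; since $\cT\in\cF^\cQ_{1,1/2}$ we have $\cT=\tfrac12\cI+\tfrac12\cK_0$, and $\cK_0$ is $\cQ$--nonexpansive (hence so is $-\cK_0=\cI-2\cT$).

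For part~(i), I plug the decomposition of $\cT$ into $\cT_\gamma$ to obtain $\cT_\gamma=(1-\tfrac{\gamma}{2})\cI+\tfrac{\gamma}{2}\cK_0$, which directly shows $\cT_\gamma\in\cF^\cQ_{1,\gamma/2}$. For $\gamma\cR=\gamma\cI-\gamma\cT$, I solve for the averaging coefficient by matching $(1-\alpha)\cI+\alpha\cK=\gamma\cI-\gamma\cT$ with $\alpha=(2-\gamma)/2$; this forces $\cK=\tfrac{\gamma}{2-\gamma}(\cI-2\cT)$, whose $\cQ$--Lipschitz constant is $\gamma/(2-\gamma)$, establishing $\gamma\cR\in\cF^\cQ_{\gamma/(2-\gamma),(2-\gamma)/2}$.

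Part~(ii) will be obtained by scaling: from Lemma \ref{l_r}--(ii), $\langle\bb_1-\bb_2\mid\cR\bb_1-\cR\bb_2\rangle_\cQ\ge\|\cR\bb_1-\cR\bb_2\|_\cQ^2$, so multiplying both sides by $\gamma>0$ yields exactly the $1/\gamma$--cocoercivity inequality. For part~(iii), I expand $\langle\bb_1-\bb_2\mid\cT_\gamma\bb_1-\cT_\gamma\bb_2\rangle_\cQ$ using the convex combination $\cT_\gamma=(1-\gamma)\cI+\gamma\cT$, apply the $\cQ$--firm nonexpansiveness of $\cT$ term by term, and then invoke convexity of $\|\cdot\|_\cQ^2$ (valid because $\cQ\in\cM_\cS^+$) to majorize $\|(1-\gamma)(\bb_1-\bb_2)+\gamma(\cT\bb_1-\cT\bb_2)\|_\cQ^2$ by $(1-\gamma)\|\bb_1-\bb_2\|_\cQ^2+\gamma\|\cT\bb_1-\cT\bb_2\|_\cQ^2$; this is precisely where the hypothesis $\gamma\in(0,1]$ is needed.

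Part~(iv) then follows by packaging: the under-relaxed case is immediate from (iii), since $\cQ$--based $1$--cocoercivity is the definition of $\cQ$--firm nonexpansiveness. In the over-relaxed regime $\gamma\in(1,2)$, (i) gives $\cT_\gamma\in\cF^\cQ_{1,\gamma/2}$ with $\gamma/2\in(1/2,1)$, so $\cT_\gamma$ is $\cQ$--averaged and hence $\cQ$--nonexpansive, but the averaging parameter exceeds $1/2$; the failure of $\cQ$--firm nonexpansiveness will be shown by exhibiting a simple instance (e.g. $\cA$ a nonzero scalar multiple of identity in one dimension, $\cQ=\cI$) where the cocoercivity constant of $\cT_\gamma$ computed directly falls below $1$. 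The only delicate step is this last sharpness claim; the rest is an essentially mechanical rewriting supported by Lemmas \ref{l_t} and \ref{l_r}.
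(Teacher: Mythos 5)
Your proof is correct, but it takes a genuinely different route from the paper's. The paper disposes of all four parts by citation: part (i) invokes Lemma \ref{l_t}--(ii) together with the composition rule \cite[Theorem 2.8--(iv)]{fxue_1}, and parts (ii)--(iv) invoke \cite[Theorem 2.8--(ii),(iii)]{fxue_1}, so the entire content lives in the companion paper's general calculus for $\cF^\cQ_{\xi,\alpha}$ operators. You instead rebuild everything from scratch: the reflected-resolvent decomposition $\cT=\tfrac12\cI+\tfrac12\cK_0$ gives (i) by direct substitution (your matching of $\alpha=(2-\gamma)/2$ and $\cK=\tfrac{\gamma}{2-\gamma}(\cI-2\cT)$ is exactly right), and (ii)--(iii) follow from scaling and from convexity of $\|\cdot\|_\cQ^2$, which transparently exposes why $\gamma\in\,]0,1]$ is needed in (iii) but not in (i)--(ii). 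This self-contained argument is arguably more informative than the paper's, at the cost of length. Two small caveats: in (iv) you treat ``$\cQ$--based $1$--cocoercive'' as the \emph{definition} of $\cQ$--firm nonexpansiveness, whereas the paper's Definition \ref{def_lip} defines the latter as membership in $\cF^\cQ_{1,\frac12}$; the two are equivalent here only because $\cQ\in\cM_\cS^+$ is in force throughout Section \ref{sec_relax} (compare Lemma \ref{l_t}--(i) versus (ii)), and you should say so. Second, the non-firm-nonexpansiveness claim for $\gamma\in\,]1,2[$ must be read as ``not guaranteed'' rather than ``always fails'' (take $\cA=\mathbf{0}$ and $\cT_\gamma=\cI$); your one-dimensional example $\cA=a\cI$, $\cQ=\cI$ with $a>\tfrac{1}{\gamma-1}$ does produce a scalar map $1-\gamma\tfrac{a}{1+a}<0$ violating $1$--cocoercivity, so the sketch closes correctly under that reading, but you should actually carry out the computation rather than merely promise it.
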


\begin{proof}
(i) Lemma \ref{l_t}--(ii) and \cite[Theorem 2.8--(iv)]{fxue_1}. 

(ii)--(iii) \cite[Theorem 2.8--(iii)]{fxue_1}.

(iv) \cite[Theorem 2.8--(ii)]{fxue_1}.
\end{proof}

\vskip.2cm
If $\cA$ is $\mu$--strongly monotone, $\cT_\gamma$ has the following properties.
\begin{lemma}  \label{l_t_gamma_strong}
If $\gamma \in \ \big] 0, 1+\frac{\|\cQ\|} {2\mu +\|\cQ\|} \big[$,  the following hold. 

{\rm (i)} $\gamma \cR \in \cF^\cQ_{\frac{ \gamma \|\cQ\|}
{ 2(1-\gamma) \mu + (2-\gamma) \|\cQ\|}, 
\frac { 2(1-\gamma) \mu + (2-\gamma) \|\cQ\|} 
{2\mu+ 2 \|\cQ\|}  }$, 
$ \cT_\gamma \in  \cF^\cQ_{\frac{  \|\cQ\|}
{ 2  \mu + \|\cQ\|},    
\frac { \gamma( 2  \mu +  \|\cQ\|)} {2\mu+ 2 \|\cQ\|}  }$.  

{\rm (ii)} $\gamma \cR$ is $\cQ$--based $\frac{\|\cQ\| +(1-\gamma^2) \mu } {\gamma \|\cQ\| + 2\gamma
(1-\gamma) \mu}$--cocoercive. In particular, if 
 $\gamma  \in \ ]0, 1]$, $\gamma \cR$ is $\cQ$--firmly nonexpansive.
   
{\rm (iii)} If $\gamma  \in \ ] 0, 1]$,  $\cT_\gamma$ is $\cQ$--based 
$\frac{1}{2} (1+ \frac{2\mu+\|\cQ\|} 
{\|\cQ\| +2 (1-\gamma) \mu})$--cocoercive, and  $\cQ$--firmly nonexpansive. 
\end{lemma}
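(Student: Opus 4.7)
The plan is to derive everything from Lemma \ref{l_strong}, which supplies two useful characterizations of $\cT$: from (ii), $\cT \in \cF^\cQ_{\xi_0, \alpha_0}$ with $\xi_0 := \|\cQ\|/(2\mu+\|\cQ\|)$ and $\alpha_0 := (2\mu+\|\cQ\|)/(2\mu+2\|\cQ\|)$; and from (iii) and (iv), the $\cQ$-firm nonexpansiveness of $\cT, \cR$ together with the sharp cocoercivity-plus-strong-monotonicity inequality. These feed into the generic relaxation/complementation calculus for $\cF^\cQ_{\cdot,\cdot}$ classes recorded in \cite[Theorem 2.8]{fxue_1}.

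For (i), first observe the key identity $1/\alpha_0 = 1 + \|\cQ\|/(2\mu+\|\cQ\|)$, which is exactly the stated upper bound on $\gamma$; hence $\gamma\alpha_0 \in (0,1)$ and the standard relaxation rule immediately yields $\cT_\gamma \in \cF^\cQ_{\xi_0, \gamma\alpha_0}$, with $\gamma\alpha_0 = \gamma(2\mu+\|\cQ\|)/(2\mu+2\|\cQ\|)$ matching the claimed averagedness constant. For $\gamma\cR = \cI-\cT_\gamma$, I would write $\cT_\gamma = (1-\gamma\alpha_0)\cI + \gamma\alpha_0 \cK$ with $\cK$ a $\xi_0$-Lipschitz operator, then recast $\gamma\cR = (1-\alpha')\cI + \alpha'\cN$ by setting $\alpha' := 1-\gamma\alpha_0$ and $\cN := -(\gamma\alpha_0/\alpha')\cK$, which is $\xi'$-Lipschitz with $\xi' := \gamma\alpha_0\xi_0/\alpha'$. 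Direct arithmetic simplification gives $\alpha' = (2(1-\gamma)\mu+(2-\gamma)\|\cQ\|)/(2\mu+2\|\cQ\|)$ and $\xi' = \gamma\|\cQ\|/(2(1-\gamma)\mu+(2-\gamma)\|\cQ\|)$, exactly the displayed constants.

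For (ii), the specified cocoercivity constant of $\gamma\cR$ follows by plugging the parameters $(\xi',\alpha')$ from (i) into the general cocoercivity bound for $\cF^\cQ_{\xi,\alpha}$ operators in \cite[Theorem 2.8-(iii)]{fxue_1}; alternatively, scale the inequality of Lemma \ref{l_strong}-(iv) by $\gamma$ and combine the resulting two terms via the nonexpansiveness bound $\|\gamma\cR\bb_1-\gamma\cR\bb_2\|_\cQ \le \gamma\|\bb_1-\bb_2\|_\cQ$ through an interpolation argument. The $\cQ$-firm nonexpansiveness of $\gamma\cR$ for $\gamma\in(0,1]$ is quickest by scaling Lemma \ref{l_strong}-(iii): multiplying $\langle \bb_1-\bb_2|\cR\bb_1-\cR\bb_2\rangle_\cQ \ge \|\cR\bb_1-\cR\bb_2\|_\cQ^2$ by $\gamma$ exhibits $\gamma\cR$ as $(1/\gamma)$-cocoercive, which is at least firm nonexpansiveness whenever $\gamma\le 1$.

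For (iii), the cocoercivity constant of $\cT_\gamma$ is obtained by the same $\cF^\cQ_{\xi_0,\gamma\alpha_0}$-to-cocoercivity pipeline used in (ii). The $\cQ$-firm nonexpansiveness of $\cT_\gamma$ for $\gamma\in(0,1]$ is most transparent by reusing Lemma \ref{l_strong}-(iii) directly rather than the $\xi_0,\alpha_0$ description: writing $\cT = (1/2)\cI+(1/2)\cK$ with $\cK$ $\cQ$-nonexpansive yields $\cT_\gamma = (1-\gamma/2)\cI + (\gamma/2)\cK \in \cF^\cQ_{1,\gamma/2}$, and $\gamma/2\le 1/2$ places $\cT_\gamma$ in a class contained in $\cF^\cQ_{1,1/2}$. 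The principal obstacle across all three parts is not conceptual but arithmetic bookkeeping: repeatedly verifying that the rational expressions output by the generic machinery collapse to the precise formulas displayed in the statement, particularly for the cocoercivity constants in (ii) and (iii), where at $\gamma=1$ one must recover the constants of Lemma \ref{l_strong}-(i).
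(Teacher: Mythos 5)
Your proposal is correct and follows essentially the same route as the paper: the paper's proof simply invokes Lemma \ref{l_strong}--(ii) for the classes of $\cT$ and $\cR$ and then cites \cite[Theorem 2.8--(ii),(iii),(iv)]{fxue_1} for the relaxation/complementation and cocoercivity constants, which is exactly the pipeline you describe. You additionally spell out the relaxation arithmetic (the identity $1/\alpha_0 = 1+\|\cQ\|/(2\mu+\|\cQ\|)$ and the computation of $(\xi',\alpha')$ for $\gamma\cR$) that the paper delegates to the external reference, so your write-up is, if anything, more self-contained.
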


\begin{proof}
(i) First,  $\cT \in \cF^\cQ_{\frac{\|\cQ\|}
{\|\cQ\| +2\mu},  \frac{2\mu +\|\cQ\|} {2\mu+2\|\cQ\|} }$ and $\cR \in \cF^\cQ_{1,  \frac{\|\cQ\|} {2\mu+2\|\cQ\|} }$ by Lemma \ref{l_strong}--(ii). Then,  (i) follows from  Lemma \ref{l_t}--(ii) and \cite[Theorem 2.8--(iv)]{fxue_1}. 

\vskip.2cm
(ii)--(iii) \cite[Theorem 2.8--(ii), (iii)]{fxue_1}. 
\end{proof}

\subsection{Krasnosel'skii-Mann iteration}
The scheme is given by:
\be \label{km_it}
\bb^{k+1} := \cT_\gamma \bb^k
\ee
where $\cT_\gamma$ is defined in \eqref{t_relax}. This is the Krasnosel'skii-Mann iteration of $\cT$, also the Banach-Picard iteration of $\cT_\gamma$.  The convergence is given  below.

\begin{theorem}[Convergence in terms of $\cQ$--based distance] \label{t_km}
Let $\bb^0\in \cH$, $\{\bb^k\}_{k \in \N}$ be a sequence generated by \eqref{km_it}. If $\gamma \in \  ]0, 2[$, then, the following hold.

{\rm (i)} $\cT_\gamma$ is $\cQ$--asymptotically regular.

{\rm (ii) [Basic convergence]}  There exists $\bbstar \in \Fix \cT$, such that $\bb^k \rightarrow \bbstar$, as $k\rightarrow \infty$.

{\rm (iii) [Sequential error]}   $\|\bb^{k+1 } -\bb^{k} \|_\cQ$ has the pointwise sublinear convergence rate of $\cO(1/\sqrt{k})$:
\[
\big\|\bb^{k +1} -\bb^{k} \big\|_\cQ
\le \frac{1}{\sqrt{k+1}}   \sqrt{ \frac{\gamma}{2-\gamma} }
\big\|\bb^{0} -\bb^\star \big\|_\cQ,
\forall k \in \N
\]
\end{theorem}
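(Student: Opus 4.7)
The plan is to reduce all three claims to the already-established convergence theory for $\cQ$--based averaged operators, exactly as was done for the unrelaxed Banach--Picard case in Proposition \ref{c_ppa}. The key observation is that $\cT_\gamma$ has precisely the right averagedness structure for \cite[Theorem 3.3]{fxue_1} to apply verbatim, only with different parameters.

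First I would verify that $\Fix \cT_\gamma = \Fix \cT = \zer \cA$. Indeed, for $\gamma \neq 0$ one has $\cT_\gamma \bb = \bb \Longleftrightarrow \gamma(\cT \bb - \bb) = \bzero \Longleftrightarrow \cT \bb = \bb$, and the equality $\Fix \cT = \zer \cA$ was already noted in the proof of Proposition \ref{c_ppa}. Hence any limit point produced by the Banach--Picard iteration of $\cT_\gamma$ is automatically a zero of $\cA$, yielding (ii) as soon as convergence of $\{\bb^k\}$ to a fixed point of $\cT_\gamma$ is established.

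Next, by Lemma \ref{l_t_gamma}(i), $\cT_\gamma \in \cF^\cQ_{1,\gamma/2}$, i.e.\ $\cT_\gamma$ is $\cQ$--based $1$--Lipschitz and $(\gamma/2)$--averaged. With this in hand, all three claims follow by substituting $\xi = 1$ and $\alpha = \gamma/2$ into \cite[Theorem 3.3-(i),(ii),(iii)]{fxue_1}: part (i) there delivers the $\cQ$--asymptotic regularity of $\cT_\gamma$; part (ii) gives convergence of $\{\bb^k\}$ to some $\bbstar \in \Fix \cT_\gamma = \zer \cA$; and part (iii) supplies the pointwise bound
\[
\big\|\bb^{k+1} - \bb^k\big\|_\cQ \;\le\; \frac{1}{\sqrt{k+1}}\,\sqrt{\tfrac{\alpha}{1-\alpha}}\,\big\|\bb^0 - \bbstar\big\|_\cQ ,
\]
after exploiting the monotone non-increase of $\|\bb^{k+1}-\bb^k\|_\cQ$ that is intrinsic to $\cQ$--averaged iterations. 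The algebraic identity $\sqrt{\alpha/(1-\alpha)} = \sqrt{(\gamma/2)/(1-\gamma/2)} = \sqrt{\gamma/(2-\gamma)}$ then recovers exactly the stated constant.

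Because the theorem is, in essence, a specialization of a previously proved general result, I do not anticipate any genuine obstacle. The only points worth checking are (a) that the standing hypothesis $\gamma \in \ ]0,2[$ places $\alpha = \gamma/2$ in the open interval $]0,1[$ required by Definition \ref{def_lip} so that the averagedness framework is applicable, which is immediate, and (b) the consistency of the rate constant under the substitution, which is the one-line algebra displayed above. If there is a delicate step at all, it is verifying the asymptotic regularity and the summability $\sum_k \|\bb^{k+1}-\bb^k\|_\cQ^2 < \infty$ that underlies (iii), but these are precisely what \cite[Theorem 3.3]{fxue_1} packages for us once the $\cQ$--averagedness of $\cT_\gamma$ has been certified.
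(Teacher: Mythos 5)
Your proposal is correct and follows essentially the same route as the paper: invoke Lemma \ref{l_t_gamma}-(i) to certify $\cT_\gamma \in \cF^\cQ_{1,\gamma/2}$ and then substitute $\xi=1$, $\alpha=\gamma/2$ into \cite[Theorem 3.3]{fxue_1}. The only additions — checking $\Fix\cT_\gamma=\Fix\cT$ and the one-line algebra $\sqrt{\alpha/(1-\alpha)}=\sqrt{\gamma/(2-\gamma)}$ — are details the paper leaves implicit.
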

\begin{proof}
By Lemma \ref{l_t_gamma}-(i), $ \cT_\gamma \in \cF^\cQ_{1, \frac{\gamma}{2}}$. All of the results follows from \cite[Theorem 3.3]{fxue_1}, by substituting $\xi=1$ and $\alpha=\gamma/2$; or directly from \cite[Corollary 3.5]{fxue_1}.
\end{proof}

Under the condition of $\mu$--strongly monotone $\cA$,
the convergence properties of  \eqref{km_it} are presented as follows. 
\begin{proposition}[Convergence in terms of $\cQ$--based distance] \label{p_km_mu}
Let $\bb^0\in \cH$, $\{\bb^k\}_{k \in \N}$ be a sequence generated by \eqref{km_it}. If $\cA$ is  $\mu$--strongly monotone, $\gamma \in \ \big] 0, 1+ \frac{\|\cQ\|}
{2\mu +\|\cQ\|} \big[$, then, the following hold.

{\rm (i) [Sequential error]}   $\|\bb^{k+1 } -\bb^{k} \|_\cQ$ has the pointwise sublinear convergence rate of $\cO(1/\sqrt{k})$:
\[
\big\|\bb^{k +1} -\bb^{k} \big\|_\cQ
\le \frac{1}{\sqrt{k+1}} \cdot 
\sqrt{ \frac{\gamma(2\mu+\|\cQ\|)} 
 {2\mu (1-\gamma)  + (2-\gamma) \|\cQ\|} }
 \big\|\bb^{0} -\bb^\star \big\|_\cQ,
\forall k \in \N
\]

{\rm (ii) [$q$--linear convergence]} Both $\|\bb^{k} -\bbstar \|_\cQ$ and $\|\bb^{k} -\bb^{k+1} \|_\cQ$  are $q$--linearly convergent with the rate of $\sqrt{ 1 - \frac{2 \gamma \mu }  {2\mu +\|\cQ\|} } $.

{\rm (iii) [$r$--linear convergence]}  If $\|\cQ\| < (\sqrt{5}+1) \mu$,  $\gamma \in \ \big] \frac{3-\sqrt{5} }{4} (2 + \frac{\|\cQ\|} {\mu} ), 1 \big[ $, 
 $\big\| \bb^k - \bb^{k+1} \big\|_\cQ$  is globally $r$--linearly convergent w.r.t. $\big\| \bb^0 - \bbstar \big\|_\cQ$:
\[
    \big\| \bb^k - \bb^{k+1} \big\|_\cQ
   \le \frac{\gamma}{1-\gamma} \cdot 
   \sqrt{  1 + \frac{ \|\cQ\|  }   {2 \mu }  }
    \cdot \Big(1- \frac{2 \gamma  \mu }
{2\mu +\|\cQ\|} \Big)^{\frac{k+1}{4} }  
\big\| \bb^0 - \bbstar \big\|_\cQ
\]
The above inequality is also locally satisfied, for  $k \ge  \frac{\ln((1+\sqrt{5})/2)}  {\ln \sqrt{ \frac{2\mu +\|\cQ\|}
{ 2(1-\gamma)\mu +\|\cQ\|}  } } - 1$, 
 if $0< \gamma <  \min \big\{ \frac{ 3 - \sqrt{5} }{4}  ( 2 +\frac{ \|\cQ\|} {\mu } ) , 1 \big\} $.  
\end{proposition}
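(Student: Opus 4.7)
The strategy is to identify $\cT_\gamma$ as a $\cQ$-based $\xi$-Lipschitz $\alpha$-averaged operator with explicit parameters, and then directly invoke the corresponding general rate results from \cite[Theorem 3.3]{fxue_1}. Specifically, by Lemma \ref{l_t_gamma_strong}-(i), under the stated hypotheses one has $\cT_\gamma \in \cF^\cQ_{\xi,\alpha}$ with
$$\xi = \frac{\|\cQ\|}{2\mu+\|\cQ\|}, \qquad \alpha = \frac{\gamma(2\mu+\|\cQ\|)}{2\mu+2\|\cQ\|}.$$
Each of the three items will then follow, in parallel with the proof of Proposition \ref{p_ppa_strong}, by substituting these values into the appropriate part of \cite[Theorem 3.3]{fxue_1}.

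For (i), I would substitute $\xi,\alpha$ into the pointwise sublinear bound \cite[Theorem 3.3-(iii)]{fxue_1}. Using $1-\alpha = \frac{2\mu(1-\gamma)+(2-\gamma)\|\cQ\|}{2\mu+2\|\cQ\|}$, the constant $\sqrt{\alpha/(1-\alpha)}$ collapses to the advertised $\sqrt{\gamma(2\mu+\|\cQ\|) / (2\mu(1-\gamma)+(2-\gamma)\|\cQ\|)}$. Consistency check: setting $\gamma = 1$ recovers the PPA constant from Proposition \ref{p_ppa_strong}, and setting $\mu = 0$ (with $\xi=1$) recovers the constant $\sqrt{\gamma/(2-\gamma)}$ of Theorem \ref{t_km}.

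For (ii), I would apply \cite[Theorem 3.3-(iv)]{fxue_1}, whose $q$-linear rate is $\sqrt{\nu}$ with $\nu := 1-\alpha+\alpha\xi^2$. A short algebraic reduction (expanding $\alpha\xi^2$, putting everything over the common denominator $(2\mu+2\|\cQ\|)(2\mu+\|\cQ\|)$, and simplifying) yields $\nu = (2(1-\gamma)\mu+\|\cQ\|)/(2\mu+\|\cQ\|) = 1 - \tfrac{2\gamma\mu}{2\mu+\|\cQ\|}$, which matches the claim; again the $\gamma=1$ specialization reproduces $\sqrt{\|\cQ\|/(\|\cQ\|+2\mu)}$ from Proposition \ref{p_ppa_strong}-(i).

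For (iii), I would invoke \cite[Theorem 3.3-(v)]{fxue_1} with the same $\xi,\alpha$. The prefactor $\tfrac{\gamma}{1-\gamma}\sqrt{1+\|\cQ\|/(2\mu)}$ and the exponential base $1-\tfrac{2\gamma\mu}{2\mu+\|\cQ\|}$ both arise from substituting $\xi,\alpha$ into the generic $r$-linear formula of that theorem; the blow-up of the prefactor as $\gamma \to 1$ is the reason this estimate is stated on $\gamma \in \ ]\cdot,1[$ rather than including the endpoint. The threshold condition for \emph{global} decay requires the scalar inequality underlying \cite[Theorem 3.3-(v)]{fxue_1} to hold at $k=0$, which, after substitution, reduces to a quadratic in $\gamma$ whose admissible range is exactly $\gamma > \tfrac{3-\sqrt 5}{4}(2+\|\cQ\|/\mu)$; this interval is non-empty precisely when $\|\cQ\| < (\sqrt 5+1)\mu$. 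When this fails, the same inequality still holds once $(1-\alpha+\alpha\xi^2)^{-k/2} \ge (1+\sqrt 5)/2$, which gives the logarithmic onset time $k \ge \ln((1+\sqrt5)/2)/\ln\sqrt{(2\mu+\|\cQ\|)/(2(1-\gamma)\mu+\|\cQ\|)} - 1$.

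The entire proof is essentially a specialization of pre-existing abstract estimates, so I do not anticipate any conceptual obstacle; the only genuinely tedious step is the algebraic bookkeeping in (iii), namely verifying that the threshold $\tfrac{3-\sqrt 5}{4}(2+\|\cQ\|/\mu)$ and the companion condition $\|\cQ\| < (\sqrt 5+1)\mu$ are exactly what makes the base quadratic inequality of \cite[Theorem 3.3-(v)]{fxue_1} compatible with $\gamma \in \ ]0,1[$.
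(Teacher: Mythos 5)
Your proposal matches the paper's proof essentially step for step: both identify $\cT_\gamma \in \cF^\cQ_{\xi,\alpha}$ with $\xi = \frac{\|\cQ\|}{2\mu+\|\cQ\|}$ and $\alpha = \frac{\gamma(2\mu+\|\cQ\|)}{2\mu+2\|\cQ\|}$ via Lemma \ref{l_t_gamma_strong}-(i), compute $\nu = 1-\alpha+\alpha\xi^2 = 1-\frac{2\gamma\mu}{2\mu+\|\cQ\|}$, and substitute into \cite[Theorem 3.3-(iii),(iv),(v)]{fxue_1}, with the same reduction of the $r$-linear prefactor on $\gamma\in\ ]0,1[$ and the same non-emptiness condition $\|\cQ\|<(\sqrt{5}+1)\mu$. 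The argument is correct and complete to the same level of detail as the paper's.
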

\begin{proof}
(i) Substituting $\alpha=\frac{\gamma(2\mu+\|\cQ\|)}
{2\mu+\|\cQ\|}$ into  \cite[Theorem 3.3--(iii)]{fxue_1}.

\vskip.2cm
(ii) By Lemma \ref{l_t_gamma_strong}-(i), we have
$ \cT_\gamma \in  \cF^\cQ_{\frac{  \|\cQ\|}
{ 2  \mu + \|\cQ\|},   
\frac { \gamma( 2  \mu +  \|\cQ\|)} {2\mu+ 2 \|\cQ\|}  }$.  
Denote $\nu:= 1-\alpha + \alpha \xi^2$, which, for $\cT_\gamma$, is computed as:
 \[
\nu =  1-  \frac { \gamma (2  \mu +  \|\cQ\|)} 
{2\mu+ 2 \|\cQ\|} + \frac { \gamma (2  \mu +  \|\cQ\|)} 
{2\mu+ 2 \|\cQ\|}  \cdot \frac{  \|\cQ\|^2}
{ (2  \mu + \|\cQ\|)^2} = 1-
\frac{2 \gamma  \mu } {2\mu +\|\cQ\|}
 \]   
Then, (ii) follows by   \cite[Theorem 3.3--(iv)]{fxue_1}. 

\vskip.2cm
(iii) By  \cite[Theorem 3.3--(v)]{fxue_1}, we obtain that if $\gamma >\frac{3-\sqrt{5} }{4} (2 + \frac{\|\cQ\|} {\mu} ) $, then:
\[
    \big\| \bb^k - \bb^{k+1} \big\|_\cQ
   \le\sqrt{  \frac{2 \gamma^2 \mu  (2\mu+\|\cQ\|)  } 
   { [(2-2\gamma) \mu +(2-\gamma) \|\cQ\|] \cdot 
  [ (2-2\gamma) \mu +\|\cQ\|] }  }
    \cdot \Big( \frac{2(1-\gamma) \mu +\|\cQ\|}
{2\mu +\|\cQ\|} \Big)^{\frac{k+1}{4} }  
\big\| \bb^0 - \bbstar \big\|_\cQ
\]
which can be reduced to (iii), if $\gamma \in \ ]0,1[$. In addition, to guarantee $\frac{3-\sqrt{5} }{4} (2 + \frac{\|\cQ\|} {\mu} )  <1$, we need to require that  $\|\cQ\| < (\sqrt{5}+1) \mu$.

\vskip.1cm
Finally, note that the same results can also be reached by \cite[Corollary 3.5--(iv), (v)]{fxue_1}.
\end{proof}

\begin{remark}
Proposition \ref{p_km_mu} can be similarly deduced from \cite[Proposition 3.4]{fxue_1}. First, if $\gamma  \in \  ]0, 1]$,  $\cT_\gamma$ is $\cQ$--based 
$\frac{1}{2} (1+ \frac{2\mu+\|\cQ\|} 
{\|\cQ\| +2 (1-\gamma) \mu})$--cocoercive, by Lemma \ref{l_t_gamma_strong}--(iii). Substituting $\beta = \frac{1}{2} (1+ \frac{2\mu+\|\cQ\|} 
{\|\cQ\| +2 (1-\gamma) \mu}) $ into \cite[Proposition 3.4--(iii)]{fxue_1}, we obtain:
\[
\big\|\bb^{k +1} -\bb^{k} \big\|_\cQ
\le \frac{1}{\sqrt{k+1}} \cdot 
\sqrt{ \frac{2\mu+\|\cQ\|}  {2\mu (1-\gamma)  + \|\cQ\|} }
 \big\|\bb^{0} -\bb^\star \big\|_\cQ,
\forall k \in \N
\]
which is valid for only $\gamma \in \ ]0, 1[$. This is a slightly different result from Proposition \ref{p_km_mu}--(i).

Proposition \ref{p_km_mu}--(ii) and (iii) can be exactly obtained by \cite[Proposition 3.4--(iv), (v)]{fxue_1}.
\end{remark}

\section{Further extension: A generalized metric resolvent}
\label{sec_general}
\subsection{Scheme}
We further consider  a generalized version of operator $\cT$, defined as:
\be \label{tt_relax}
\cTtilde := \cI +\cM (\cT-\cI)
\ee
where $\cT$ is given by  \eqref{t}, $\cM$ is a correction/relaxation matrix. $\cTtilde$ can also be expressed in terms of $\cR = \cI-\cT$ as  $\cTtilde = \cI - \cM \cR$.  In particular, if $\cM = \gamma \cI$,   $\cTtilde$ reduces to the relaxed version discussed in Section \ref{sec_relax}. The fixed-point iteration of $\cTtilde$  is given by:
\be \label{gkm_it}
\bb^{k+1} := \cTtilde \bb^k
\ee
This is the generalized Krasnosel'skii-Mann iteration of $\cT$, also the Banach-Picard iteration of $\cTtilde$.  

Before the convergence analysis, we need the well-known  Opial's lemma  \cite{opial} as a basic tool for  convergence analysis. Also see \cite[Lemma 2.39]{plc_book} for the proof.
\begin{lemma} \label{l_opial} {\rm  [Opial's lemma \cite{opial}] }
Let $\{\bb^k\}_{k \in \N}$ be a sequence in $\cH$, and let $C$ be  a nonempty set $C \subset \cH$. Suppose that:

{\rm (a)} for every $\bbstar  \in C$, $\{ \|\bb^k -\bbstar\|\}_{k\in\N}$ converges, i.e. $\lim_{k\rightarrow \infty} \|\bb^k
-\bbstar \| $ exists; 

{\rm (b)} every sequential cluster point of  $\{\bb^{k}\}_{k \in \N}$ belongs to $C$. 

\noindent
Then,  $\{\bb^k\}_{k\in\N}$ converges to a point in $C$.
\end{lemma}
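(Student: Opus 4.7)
My plan is to follow the standard three-step argument for Opial-type lemmas: deduce boundedness from (a), extract a cluster point via Bolzano--Weierstrass, and then show that no two distinct cluster points can coexist, forcing the entire sequence to converge to the unique cluster point.

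First I would use (a) to observe that for any fixed $\bbstar \in C$, the sequence $\{\|\bb^k - \bbstar\|\}_{k \in \N}$ is convergent and therefore bounded, which gives $\|\bb^k\| \le \|\bb^k - \bbstar\| + \|\bbstar\| \le M$ for some $M < \infty$. Since $\cH$ is finite-dimensional (as stated in the paper's standing footnote), Bolzano--Weierstrass yields at least one sequential cluster point, and by (b) that cluster point lies in $C$. So the set of cluster points is a nonempty subset of $C$.

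The key step is the uniqueness of cluster points. Suppose $\bar{\bb}_1$ and $\bar{\bb}_2$ are two cluster points of $\{\bb^k\}_{k \in \N}$; by (b), both belong to $C$, so (a) applies to each. I would exploit the polarization-type identity
\[
\big\| \bb^k - \bar{\bb}_1 \big\|^2 - \big\| \bb^k - \bar{\bb}_2 \big\|^2
= 2 \big\langle \bb^k \,\big|\, \bar{\bb}_2 - \bar{\bb}_1 \big\rangle + \big\|\bar{\bb}_1\big\|^2 - \big\|\bar{\bb}_2\big\|^2.
\]
Since both terms on the left converge by (a), the inner product $\langle \bb^k \mid \bar{\bb}_2 - \bar{\bb}_1 \rangle$ converges to some limit $\ell$. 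Passing to subsequences along which $\bb^k \to \bar{\bb}_1$ and $\bb^k \to \bar{\bb}_2$ respectively (here finite-dimensionality lets me promote cluster points to strong subsequential limits), I get $\langle \bar{\bb}_1 \mid \bar{\bb}_2 - \bar{\bb}_1 \rangle = \ell = \langle \bar{\bb}_2 \mid \bar{\bb}_2 - \bar{\bb}_1 \rangle$, which rearranges to $\|\bar{\bb}_1 - \bar{\bb}_2\|^2 = 0$, i.e. $\bar{\bb}_1 = \bar{\bb}_2$.

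Finally, since the bounded sequence $\{\bb^k\}_{k\in\N}$ has a unique cluster point $\bar{\bb} \in C$, every subsequence has a further subsequence converging to $\bar{\bb}$, hence the whole sequence converges to $\bar{\bb}$, which concludes the proof. The main obstacle is the uniqueness step, since steps one and four are essentially routine consequences of boundedness in finite dimension; the polarization identity is the pivotal tool that couples hypothesis (a) with the geometric information in (b).
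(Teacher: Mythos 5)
Your proof is correct and takes essentially the same route as the argument the paper relies on --- the paper does not prove the lemma itself but cites \cite[Lemma 2.39]{plc_book}, whose standard proof is exactly your three steps: boundedness from (a), extraction of a cluster point in $C$, and uniqueness via the polarization identity, with finite-dimensionality of $\cH$ letting you upgrade cluster points to strong subsequential limits. Nothing further is needed.
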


\subsection{Convergence analysis}
To analyze the convergence of \eqref{gkm_it}, we first rewrite \eqref{gkm_it} as a two--step inclusion form:
\be \label{gppa}
\left\lfloor \begin{array}{llll}
{\bf 0}  & \in & \cA \tilde{\bb}^{k} + \cQ (\tilde{\bb}^{k} -  \bb^k)  & \text{proximal step} \\ 
\bb^{k+1} & := & \bb^k + \cM ( \tilde{\bb}^{k} - \bb^k ) 
& \text{correction step}
\end{array}  \right.
\ee

Lemma \ref{l_gppa} presents several key ingredients, which are the `recipe' for proving the convergence results.

\begin{lemma} \label{l_gppa}
Let $\bb^\star \in \zer \cA$ and $\{\bb^k\}_{k\in\N}$ be a sequence generated by \eqref{gkm_it} or \eqref{gppa}. Denote $\cS: = \cQ \cM^{-1}$, $\cG := \cQ + \cQ^{\top} - \cM^\top \cQ$, and the operator $\cR := \cI - \cT$. If  $\cA$ is maximally monotone and  $\cS \in \cM_\cS$. Then, the following hold.

{\rm (i)}  $  \big\| \bb^{k+1} - \bbstar \big\|_\cS^2
\le   \big\| \bb^k - \bbstar \big\|_\cS^2
-  \big \| \bb^k -  \bb^{k+1}  \big \|
_{\cM^{-\top} \cG \cM^{-1} }^2 $

{\rm (ii)}  $ \big \langle     \cR \bb^k,  \cR \bb^k - \cR \bb^{k+1} 
  \big \rangle_{ \cM^\top \cS \cM}   \ge  \frac{1}{2}
\big\|   \cR \bb^k - \cR  \bb^{k+1}  \big\|
_{\cQ+\cQ^\top}^2$

{\rm (iii)}  $  \big \| \bb^k - \bb^{k+1} \big\|_\cS^2 - 
\big\| \bb^{k+1} - \bb^{k+2} \big\|_\cS^2   
 \ge   \big\|   \cR \bb^k -  \cR \bb^{k+1}  \big\|_\cG^2  $
\end{lemma}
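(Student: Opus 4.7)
The plan is to collapse the two-step form~\eqref{gppa} into pointwise identities and then attack the three claims in order: (i) via monotonicity of $\cA$, (ii) via cocoerciveness of $\cR$, and (iii) as an algebraic consequence of (ii). Writing $\tilde{\bb}^k = \cT \bb^k$, the two steps of~\eqref{gppa} yield $\cR \bb^k = \bb^k - \tilde{\bb}^k$, $\bb^k - \bb^{k+1} = \cM \cR \bb^k$, and the inclusion $\cQ \cR \bb^k = \cQ(\bb^k - \tilde{\bb}^k) \in \cA \tilde{\bb}^k$. The hypothesis $\cS = \cQ \cM^{-1} \in \cM_\cS$ is equivalent to $\cM^\top \cQ = \cQ^\top \cM$, i.e.\ $\cM^\top \cS \cM = \cM^\top \cQ$ is symmetric; this single algebraic fact carries the whole argument, since $\cQ$ itself need not be symmetric. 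I will also use throughout that a quadratic form depends only on the symmetric part of its generator, so $\|w\|_\cQ^2 = \tfrac{1}{2}\|w\|_{\cQ+\cQ^\top}^2$.

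For (i), I would expand
\[
\|\bb^{k+1}-\bbstar\|_\cS^2 - \|\bb^k-\bbstar\|_\cS^2 = 2\langle \bb^k-\bbstar, \bb^{k+1}-\bb^k\rangle_\cS + \|\bb^{k+1}-\bb^k\|_\cS^2
\]
(valid since $\cS$ is symmetric), substitute $\bb^{k+1}-\bb^k = -\cM\cR\bb^k$, and use $\cS\cM = \cQ$ together with $\cM^\top\cS\cM = \cM^\top\cQ$ to rewrite this as $-2\langle \bb^k-\bbstar, \cQ\cR\bb^k\rangle + \|\cR\bb^k\|_{\cM^\top\cQ}^2$. Splitting $\bb^k-\bbstar = \cR\bb^k + (\tilde{\bb}^k-\bbstar)$, the $\cR\bb^k$ piece produces $-2\|\cR\bb^k\|_\cQ^2 = -\|\cR\bb^k\|_{\cQ+\cQ^\top}^2$, while $-2\langle \tilde{\bb}^k-\bbstar, \cQ\cR\bb^k\rangle \le 0$ by monotonicity of $\cA$ applied to $\cQ\cR\bb^k \in \cA\tilde{\bb}^k$ and $\mathbf{0} \in \cA\bbstar$. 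Combining yields $-\|\cR\bb^k\|_{\cQ+\cQ^\top-\cM^\top\cQ}^2 = -\|\cR\bb^k\|_\cG^2$ on the right, and rewriting $\cR\bb^k = \cM^{-1}(\bb^k-\bb^{k+1})$ produces the $\cM^{-\top}\cG\cM^{-1}$-norm of $\bb^k-\bb^{k+1}$ stated in (i).

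For (ii), I would appeal to the single-point cocoerciveness
\[
\|\cR\bb^k-\cR\bb^{k+1}\|_\cQ^2 \le \langle \cR\bb^k-\cR\bb^{k+1}, \bb^k-\bb^{k+1}\rangle_\cQ,
\]
which is an immediate rewriting of Lemma~\ref{l_t}-(i) with $\cT = \cI - \cR$ (equivalently Lemma~\ref{l_r}-(i), since $\|\cdot\|_\cQ^2 = \|\cdot\|_{\cQ^\top}^2$ as quadratic forms). Substituting $\bb^k - \bb^{k+1} = \cM\cR\bb^k$ turns the right-hand side into $\langle \cM^\top\cQ\cR\bb^k, \cR\bb^k-\cR\bb^{k+1}\rangle$; the symmetry of $\cM^\top\cQ = \cM^\top\cS\cM$ then allows the two slots of the inner product to be swapped, producing exactly $\langle \cR\bb^k, \cR\bb^k-\cR\bb^{k+1}\rangle_{\cM^\top\cS\cM}$. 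Rewriting the left-hand side as $\tfrac12\|\cR\bb^k-\cR\bb^{k+1}\|_{\cQ+\cQ^\top}^2$ delivers (ii).

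For (iii), the identities $\|\bb^k-\bb^{k+1}\|_\cS^2 = \|\cM\cR\bb^k\|_\cS^2 = \|\cR\bb^k\|_{\cM^\top\cQ}^2$ and $\|\bb^{k+1}-\bb^{k+2}\|_\cS^2 = \|\cR\bb^{k+1}\|_{\cM^\top\cQ}^2$ reduce the problem to bounding $\|\cR\bb^k\|_{\cM^\top\cQ}^2 - \|\cR\bb^{k+1}\|_{\cM^\top\cQ}^2$. Since $\cM^\top\cQ$ is symmetric, the elementary expansion $\|a\|_A^2 - \|b\|_A^2 = 2\langle a-b, a\rangle_A - \|a-b\|_A^2$ with $A = \cM^\top\cQ$, $a = \cR\bb^k$, $b = \cR\bb^{k+1}$ applies; using (ii) to lower-bound $2\langle \cR\bb^k-\cR\bb^{k+1}, \cR\bb^k\rangle_{\cM^\top\cQ}$ by $\|\cR\bb^k-\cR\bb^{k+1}\|_{\cQ+\cQ^\top}^2$ and absorbing the leftover $-\|\cR\bb^k-\cR\bb^{k+1}\|_{\cM^\top\cQ}^2$ collapses the expression into $\|\cR\bb^k-\cR\bb^{k+1}\|_{\cQ+\cQ^\top-\cM^\top\cQ}^2 = \|\cR\bb^k-\cR\bb^{k+1}\|_\cG^2$. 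The main obstacle throughout is bookkeeping rather than anything substantive: with $\cQ$ nonsymmetric, one must keep $\cQ$, $\cQ^\top$, $\cQ+\cQ^\top$ and $\cM^\top\cQ$ strictly distinct, and every slot-swap inside an inner product must be justified by the single symmetry actually available, $\cM^\top\cQ = \cQ^\top\cM$.
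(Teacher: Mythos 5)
Your proposal is correct and follows essentially the same route as the paper: part (i) is the paper's monotonicity-plus-completing-the-square computation read in the opposite order, part (ii) rests on the same cocoercivity of $\cR$ (which you correctly note follows from Lemma \ref{l_t}--(i), where the paper cites the external reference) combined with $\bb^k-\bb^{k+1}=\cM\cR\bb^k$ and the symmetry $\cM^\top\cQ=\cQ^\top\cM$, and part (iii) is the identical expansion in the $\cM^\top\cS\cM$-norm. Your explicit bookkeeping of which symmetry justifies each slot-swap is, if anything, more careful than the paper's.
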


\begin{proof}
(i) From  \eqref{gppa}, we have:
\begin{eqnarray} 
0 & \le &  \big \langle \cA  \bbtilde^{k} - \cA \bbstar,
\bbtilde^k - \bbstar \big \rangle  \quad
\text{by monotonicity of $\cA$}
\nonumber \\
&=&  \big \langle \cQ (\bb^k -  \bbtilde^{k}),
\bbtilde^k - \bbstar \big \rangle
\quad  \text{ by \eqref{gppa} and ${\bf 0} \in \cA \bbstar $ }
\nonumber \\
&=&  \big \langle \cQ \cM^{-1} (\bb^k -  \bb^{k+1}),
\bb^k + \cM^{-1} (\bb^{k+1} - \bb^k) - \bbstar \big \rangle
\quad  \text{ by \eqref{gppa} }
\nonumber \\
&=&  \big \langle \bb^k -  \bb^{k+1}, \bb^k - \bbstar
\big \rangle_\cS
- \frac{1}{2} \big \| \bb^k -  \bb^{k+1}  \big \|_{\cM^{-\top} \cS + \cS \cM^{-1} }^2 
\quad  \text{ by $\cS = \cQ \cM^{-1}$}
\nonumber \\
&=& \frac{1}{2} \big\| \bb^k - \bbstar \big\|_\cS^2
- \frac{1}{2} \big\| \bb^{k+1} - \bbstar \big\|_\cS^2
- \frac{1}{2} \big \| \bb^k -  \bb^{k+1}  \big \|
_{\cM^{-\top} \cG \cM^{-1} }^2 \quad \text{by definition of $\cG$}
\nonumber 
\end{eqnarray}

\vskip.2cm
(ii) By \cite[Lemma 2.6-(v)]{fxue_1}, we have:
\[
 \big \langle     \bb^k - \bb^{k+1} , 
 \cR \bb^k - \cR \bb^{k+1}  \big \rangle_{\cQ^\top}  \ge 
\big\|   \cR \bb^k - \cR  \bb^{k+1}  \big\|_{\cQ^\top}^2
= \frac{1}{2} \big\|   \cR \bb^k - \cR  \bb^{k+1}  \big\|_{\cQ + \cQ^\top}^2
\]
Then, (ii) follows from  $\bb^k - \bb^{k+1} = \cM\cR \bb^k$ by \eqref{gppa} and $\cQ^\top = \cM^\top \cS$.

\vskip.2cm
(iii) From Lemma \ref{l_gppa}-(ii), we have:
\begin{eqnarray}  \label{dd}
& &  \big \| \bb^k - \bb^{k+1} \big\|_\cS^2 - 
\big\| \bb^{k+1} - \bb^{k+2} \big\|_\cS^2   
\nonumber \\
& = &  \big \|\cM \cR \bb^k \big\|_\cS^2 - 
\big\| \cM \cR\bb^{k+1} \big\|_\cS^2  
\quad \text{by \eqref{gppa} } 
\nonumber \\
&=& 2 \big \langle  \cR \bb^k , 
 \cR \bb^k - \cR \bb^{k+1}   \big \rangle_{ \cM^\top \cS \cM}   
- \big\| \cR\bb^k - \cR\bb^{k+1} \big\|^2
_{\cM^\top   \cS \cM }  
\nonumber \\ 
& \ge &  \big\|   \cR \bb^k -  \cR \bb^{k+1}  \big\|_{\cQ +\cQ^\top  - \cM^\top \cS\cM }^2  \qquad 
\text{by Lemma \ref{l_gppa}-(ii) }   
\nonumber 
\end{eqnarray}
This completes the proof.
\end{proof}

\vskip.3cm
Then, the convergence properties of \eqref{gkm_it} are given by the next theorem. 

\begin{theorem}[Convergence in terms of solution distance] \label{t_gppa}
Let $\{\bb^k\}_{k\in\N}$ be a sequence generated by \eqref{gkm_it}. If $\cS$ and $\cG$ defined in Lemma \ref{l_gppa} satisfy $\cS, \cG \in \cM_\cS^{++}$, then the following hold.

{\rm (i) [Basic convergence]} There exists $\bbstar\in \zer \cA$, such that $\bb^k \rightarrow \bbstar$, as $k \rightarrow \infty$. 

{\rm (ii) [Sequential convergence]}  $\|  \bb^{k } - \bb^{k+1 } \|_\cS$ has the  convergence rate of $\cO(1/\sqrt{k})$, i.e.
\[
\big\| \bb^{k+1 } - \bb^{k }  \big\|_\cS 
\le \frac{1}{ \sqrt{k+1 } } \sqrt{\frac
{\lambda_{\max}(\cS )} 
{\lambda_{\min}(\cM^{-\top} \cG \cM^{-1})}  }  
\big\|\bb^{0} -\bb^\star \big\|_\cS,
\quad k \in \N
\]
\end{theorem}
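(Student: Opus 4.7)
The plan is to derive both parts of the theorem from the three inequalities in Lemma \ref{l_gppa}, using Opial's lemma (Lemma \ref{l_opial}) for part (i) and a monotonicity-plus-telescoping argument for part (ii). The assumption $\cS, \cG \in \cM_\cS^{++}$ is what makes the quadratic forms appearing on the right-hand sides of Lemma \ref{l_gppa} genuine (squared) norms, which is the key structural ingredient.

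For part (i), I would first use Lemma \ref{l_gppa}-(i) directly. Since $\cS \in \cM_\cS^{++}$ and $\cM^{-\top}\cG\cM^{-1} \in \cM_\cS^{++}$, the inequality $\|\bb^{k+1}-\bbstar\|_\cS^2 \le \|\bb^k-\bbstar\|_\cS^2 - \|\bb^k-\bb^{k+1}\|_{\cM^{-\top}\cG\cM^{-1}}^2$ implies, for every $\bbstar \in \zer\cA$, both that $\{\|\bb^k-\bbstar\|_\cS\}_{k\in\N}$ is non-increasing (hence convergent, verifying Opial's condition (a)) and, by telescoping, that $\sum_{k\ge 0}\|\bb^k-\bb^{k+1}\|_{\cM^{-\top}\cG\cM^{-1}}^2 < \infty$, so $\bb^k-\bb^{k+1}\to \bzero$. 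To verify Opial's condition (b), take any cluster point $\bb^\infty$ of $\{\bb^k\}_{k\in\N}$. From \eqref{gppa} we have $\bbtilde^k - \bb^k = \cM^{-1}(\bb^{k+1}-\bb^k) \to \bzero$, so $\bbtilde^k$ shares the same cluster point $\bb^\infty$, and $-\cQ(\bbtilde^k-\bb^k) \in \cA\bbtilde^k$ converges to $\bzero$. Invoking the closedness of the graph of the maximally monotone operator $\cA$ yields $\bzero \in \cA\bb^\infty$, i.e.\ $\bb^\infty \in \zer\cA$. Opial's lemma then delivers convergence of the whole sequence to some point in $\zer\cA$.

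For part (ii), Lemma \ref{l_gppa}-(iii) together with $\cG \in \cM_\cS^{++}$ shows that $\{\|\bb^k-\bb^{k+1}\|_\cS^2\}_{k\in\N}$ is monotonically non-increasing. Summing Lemma \ref{l_gppa}-(i) from $k=0$ to $K$ and using the equivalence of norms (specifically $\|\cdot\|_{\cM^{-\top}\cG\cM^{-1}}^2 \ge \lambda_{\min}(\cM^{-\top}\cG\cM^{-1})\|\cdot\|^2 \ge \frac{\lambda_{\min}(\cM^{-\top}\cG\cM^{-1})}{\lambda_{\max}(\cS)}\|\cdot\|_\cS^2$) gives
\[
\sum_{k=0}^{K}\big\|\bb^k-\bb^{k+1}\big\|_\cS^2 \;\le\; \frac{\lambda_{\max}(\cS)}{\lambda_{\min}(\cM^{-\top}\cG\cM^{-1})}\,\big\|\bb^0-\bbstar\big\|_\cS^2.
\]
Combined with the monotonicity of $\|\bb^k-\bb^{k+1}\|_\cS$, one has $(K+1)\|\bb^K-\bb^{K+1}\|_\cS^2 \le \sum_{k=0}^{K}\|\bb^k-\bb^{k+1}\|_\cS^2$, and the $\cO(1/\sqrt{k})$ rate in (ii) follows immediately.

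The main obstacle is the graph-closure step in part (i): one must be careful that $\cQ$ is only assumed to make $\cS = \cQ\cM^{-1}$ symmetric positive definite and need not itself be symmetric or invertible in general, but this is not actually needed -- only boundedness of $\cQ$ is used to conclude $\cQ(\bbtilde^k-\bb^k)\to\bzero$ from $\bbtilde^k - \bb^k \to \bzero$, and this is automatic in the finite-dimensional setting $\cH$ the paper works in. A minor subtlety in part (ii) is ensuring the constants $\lambda_{\min}(\cM^{-\top}\cG\cM^{-1})$ and $\lambda_{\max}(\cS)$ are strictly positive and finite, both of which follow directly from $\cS, \cG \in \cM_\cS^{++}$ and invertibility of $\cM$ (implicitly required by the definition $\cS = \cQ\cM^{-1}$).
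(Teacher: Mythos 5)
Your proposal is correct and follows essentially the same route as the paper: Opial's lemma with condition (a) from the Fej\'{e}r-type inequality of Lemma \ref{l_gppa}-(i) and condition (b) from the telescoped summability of $\|\bb^k-\bbtilde^k\|_\cG^2$ plus the inclusion $\cQ(\bb^k-\bbtilde^k)\in\cA\bbtilde^k$, then for (ii) the eigenvalue bound on Lemma \ref{l_gppa}-(i) combined with the monotonicity of $\|\bb^k-\bb^{k+1}\|_\cS$ from Lemma \ref{l_gppa}-(iii). Your explicit appeal to graph-closedness of the maximally monotone $\cA$ in the cluster-point step is slightly more careful than the paper's phrasing, but the argument is the same.
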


\begin{proof} 
(i)  From Lemma \ref{l_gppa}-(i), the conditions $\cS,\cG \in \cM_\cS^{++}$ guarantee that 
 $\{ \| \bb^{k} - \bb^\star \|_\cS \}_{k\in\N} $ is non-increasing, and bounded from below (always being non-negative), and thus, convergent, i.e. $\lim_{k\rightarrow \infty} \| \bb^{k} - \bb^\star \|_\cS  $ exists. Thus, the condition (a) of Opial's lemma (see Lemma \ref{l_opial}) is satisfied.

On the other hand, summing up the inequality of Lemma \ref{l_gppa}-(i) from $k=0$ to $K-1$ yields:
\begin{eqnarray}  
\big \|\bb^K - \bb^\star \big\|_\cS^2  
& \le &  
\big\| \bb^{0} - \bb^\star \big\|_\cS^2 
-\sum_{k=0}^{K-1} \big\| \bb^k - \bb^{k+1} \big\|
_{\cM^{-\top} \cG \cM^{-1}}^2  \nonumber  \\
&=&   \big\| \bb^{0} - \bb^\star \big\|_\cS^2 
-\sum_{k=0}^{K-1} \big\| \bb^k - \bbtilde^{k} \big\|
_{  \cG  }^2  \quad \text{by \eqref{gppa}}
\nonumber 
\end{eqnarray}
Taking $K \rightarrow \infty$, we have: $\sum_{k=0}^{\infty} \big\| \bb^k - \bbtilde^{k} \big\|_{\cG}^2 \le \big\| \bb^{0} - \bb^\star \big\|_\cS^2 < \infty$. The condition of $\cG \succ \bf 0$  implies that  $\bb^k-\bbtilde^k \rightarrow \bf 0$, as $k \rightarrow \infty$. Since $\cQ(\bb^k - \bbtilde^k) \in \cA \bbtilde^k$ from \eqref{gppa}, we have: $\dist(\cA\bbtilde^k, {\bf 0}) \rightarrow  0$. Thus, the cluster point of $\{\bb^k\}_{k\in\N}$ belongs to $\zer \cA$. The condition (b) of Opial's lemma is satisfied. Finally, (i) is reached by Lemma \ref{l_opial}.

\vskip.2cm 
(ii) In view  of Lemma \ref{l_gppa}-(i), we have:
\[
\big\| \bb^{k+1} -\bb^\star \big\|_\cS^2 \le 
\big\| \bb^{k} - \bb^\star \big\|_\cS^2 - \frac{\lambda_{\min}( \cM^{-\top} \cG \cM^{-1})}
{\lambda_{\max}(\cS )} \big\| \bb^{k } - \bb^{k+1} \big\|_\cS^2  
\] 
 where $\lambda_{\max}$ and $\lambda_{\max}$ denote the largest and smallest eigenvalues of a matrix. 

On the other hand,  the sequence  $\{\| \bb^{k} - \bb^{k+1} \|_\cS\}_{k\in\N} $ is non-increasing, if   $\cS,\cG\in \cM_\cS^{++}$,  by Lemma \ref{l_gppa}-(iii). Finally, (ii) is obtained, following the similar proof of Theorem \ref{t_ppa}-(ii).
\end{proof}

\begin{remark}
Lemma \ref{l_gppa} and Theorem \ref{t_gppa} do not require $\cQ \in \cM_\cS^+$. The  variable metric used in this contraction property is $\cS = \cQ \cM^{-1}$, rather than $\cQ$.  Also notice that $\cG$ and $\cM^{-\top} \cG \cM^{-1}$ are symmetric, if  $\cS$ is symmetric. 
\end{remark}

\section{Applications to first-order operator splitting  algorithms}
\label{sec_eg}
The focus of this part is to show that many popular operator splitting algorithms can be expressed by the (generalized) metric resolvent, and thus enjoy the corresponding convergence properties given in Sections \ref{sec_resolvent} and \ref{sec_relax}. The proposed metric resolvent provides a unified treatment of the operator splitting methods.

\subsection{The  ADMM algorithms}
\label{sec_admm}
ADMM is one of the most commonly used algorithms for solving the  structured constrained optimization \cite{boyd_admm}:
\[
\min_{\bx,\bu} f(\bx) +g(\bu),\quad
\text{s.t.}\ \ \bA\bx+\bB\bu = \bc
\] 
where  $\bx \in \R^N$, $\bu \in \R^L$, $\bA: \R^N \mapsto \R^M$,  $\bB: \R^L \mapsto \R^M$,  $f \in \Gamma(\cH): \R^N \mapsto \ ]-\infty, +\infty]$, $g \in \Gamma(\cH): \R^L \mapsto \ ]-\infty, +\infty]$.
Two typical ADMM  algorithms are listed here to show the corresponding fixed-point iterations of the metric resolvent.

\begin{example} [Relaxed-ADMM] \label{e_admm_2}
The relaxed-ADMM is given as  \cite[Eq.(3)]{fang_2015}:
\be \label{admm}
\left\lfloor \begin{array}{lll}
\bx^{k+1} & := &  \arg \min_\bx   f(\bx) +
\frac{\tau}{2} \big\| \bA \bx +\bB \bu^k - \bc - \frac{1}{\tau}   \bs^k    \big\|^2  \\
\bu^{k+1} & := & \arg \min_\bu   g(\bu) + \frac{\tau}{2} \big\|\bB (\bu - \bu^k) +  \gamma (\bA \bx^{k+1}
+  \bB \bu^{k} - \bc ) - \frac{1}{\tau}  \bs^k \big\|^2 \\
\bs^{k+1} & := &   \bs^k -\tau \bB (\bu^{k+1} - \bu^k) 
  - \tau\gamma (\bA \bx^{k+1} + \bB \bu^{k} - \bc) 
\end{array} \right. 
\ee
The standard ADMM/DRS can be recovered by letting $\gamma=1$  \cite{boyd_admm}.

It can be shown that \eqref{admm} is essentially the Banach-Picard iteration of $\cTtilde$ \eqref{tt_relax}, with:
\[
\bb^k = \begin{bmatrix}
\bx^k \\ \bu^k \\ \bs^k \end{bmatrix}; \quad
\cA: \bb \mapsto  \begin{bmatrix}
\partial f & \bf 0 & -\bA^\top  \\
\bf 0 & \partial g   & -\bB^\top \\
\bA & \bB  & \bf 0 
\end{bmatrix} \bb - \begin{bmatrix}
\bf 0 \\ \bf 0 \\ \bc
\end{bmatrix}
\]
\[
\cQ =   \begin{bmatrix}
\bf 0 & \bf 0 & \bf 0 \\
\bf 0 & \tau \bB^\top \bB   & (1-\gamma)\bB^\top \\
\bf 0 & - \bB  & \frac{1}{\tau} \I_M
\end{bmatrix};\quad
\cM =  \begin{bmatrix}
\I_N & \bf 0 & \bf 0 \\
\bf 0 & \I_L & \bf 0 \\
\bf 0 &  -\tau\bB  & \gamma \I_M \\
\end{bmatrix}
\]
\end{example}

\begin{example} [Proximal-ADMM] \label{e_admm_3}
The proximal-ADMM is given as  \cite[modified SPADMM]{lxd_2016}, \cite[Eq.(10)]{admm_tmi}:
\[
\left\lfloor \begin{array}{lll}
\bx^{k+1} & := &  \arg \min_\bx   f(\bx) +
\frac{\tau}{2} \big\| \bA \bx +\bB \bu^k - \bc - \frac{1}{\tau}   \bs^k    \big\|^2  + \frac{1}{2} \big\|  \bx  -   \bx^k    \big\|_{\bP_1}^2  \\
\bu^{k+1} & := & \arg \min_\bu   g(\bu) + \frac{\tau}{2} \big\|\bA \bx^{k+1} + \bB \bu - \bc - \frac{1}{\tau}  \bs^k \big\|^2 
 + \frac{1}{2} \big\|  \bu - \bu^k  \big\|_{\bP_2}^2  \\
\bs^{k+1} & := &   \bs^k -\tau (\bA \bx^{k+1} + 
 \bB \bu^{k+1}  - \bc) 
\end{array} \right. 
\]
which is also the fixed-point iteration of $\cTtilde$ \eqref{tt_relax} with $\bb^k$ and $\cA$ defined in Example \ref{e_admm_2}, and
\[
\cQ =   \begin{bmatrix}
\bP_1 & \bf 0 & \bf 0 \\
\bf 0 & \bP_2 + \tau \bB^\top \bB   & \bf 0  \\
\bf 0 & - \bB  & \frac{1}{\tau} \I_M
\end{bmatrix};\quad
\cM =  \begin{bmatrix}
\I_N & \bf 0 & \bf 0 \\
\bf 0 & \I_L & \bf 0 \\
\bf 0 &  -\tau\bB  & \I_M \\
\end{bmatrix}
\]
\end{example}

\subsection{The PDHG algorithms}
\label{sec_pdhg}
Consider the problem:
\be \label{problem2}
\min_\bx\  f(\bx) + g(\bA \bx)
\ee 
where  $\bx \in \R^N$,  $\bA: \R^N \mapsto \R^L$,   $f \in \Gamma(\cH): \R^N \mapsto \ ] -\infty, +\infty]$, $g \in \Gamma(\cH): \R^L \mapsto\  ]-\infty, +\infty]$.   The PDHG generally first reformulates \eqref{problem2} as (obtained by Legendre-Fenchel transform \cite[Chapter 11]{rtr_book_2}):
\be \label{problem_pd}
\min_\bx \max_\bs \cL(\bx,\bs) := 
 f(\bx) + \langle \bs, \bA \bx \rangle  - g^*(\bs)
\ee
and then performs alternating update (by gradient descent) between primal $\bx$ and dual variable $\bs$. Two simple examples are listed here with their associated metric resolvent, which have been shown in \cite{hbs_siam_2012_2, hbs_jmiv_2017}  based on variational inequality.

\begin{example} \label{eg_pdhg} \cite[PDHGMp]{esser}
The basic PDHG, for solving \eqref{problem_pd}, is:
\be \label{pdhg}
\left\lfloor \begin{array}{llll}
\bs^{k+1}   & := &  \prox_{\sigma g^*}  \big( \bs^k +\sigma 
\bA \bx^k   \big) & \text{\rm dual step} \\
\bx^{k+1}   & := &  \prox_{\tau f}  \big( \bx^k - \tau   
\bA^\top (2\bs^{k+1} -  \bs^k )   \big) & 
\text{\rm primal step}   
\end{array} \right. 
\ee
which an be written as a simple fixed--point iteration of $\cT$ \eqref{t}: 
\[
\begin{bmatrix}  \bs^{k+1} \\  \bx^{k+1}
\end{bmatrix} = \bigg( \underbrace{  \begin{bmatrix}
\partial g^* & -\bA \\
\bA^\top  & \partial f    \end{bmatrix} }_\cA
 + \underbrace{    \begin{bmatrix}
\frac{1}{ \sigma} \I_L & \bA \\
\bA^\top   & \frac{1}{ \tau} \I_N 
\end{bmatrix} }_\cQ  \bigg)^{-1} 
\underbrace{  \begin{bmatrix}
\frac{1}{ \sigma} \I_L & \bA \\
\bA^\top   & \frac{1}{ \tau} \I_N 
\end{bmatrix} }_\cQ   \begin{bmatrix}  
 \bs^k \\   \bx^{k}
\end{bmatrix}
\]
\end{example}

\begin{example} \label{eg_pdhg_2} \cite[PDHGMu]{esser} 
Another form of PDHG is:
\be \label{pdhg_other}
\left\lfloor \begin{array}{llll}
\bs^{k+1}   & := &  \prox_{\sigma g^*}  \big( \bs^k +\sigma 
\bA (2 \bx^k - \bx^{k-1} )  \big) & \text{\rm dual step} \\
\bx^{k+1}   & := &  \prox_{\tau f}  \big( \bx^k - \tau   
\bA^\top \bs^{k+1}   \big) & 
\text{\rm primal step}   
\end{array} \right. 
\ee
which is the fixed--point iteration of $\cT$ \eqref{t}\footnote{Note that we use a mismatch of iteration indices between $\bx$ and $\bs$: $\bb^k := (\bs^k, \bx^{k-1})$. This technique can also be found in \cite{bot_2015}.}:
\[
\begin{bmatrix}  \bs^{k+1} \\  \bx^k
\end{bmatrix} = \bigg( \underbrace{  \begin{bmatrix}
\partial g^* & -\bA \\
\bA^\top  & \partial f    \end{bmatrix} }_\cA
 +   \underbrace{ \begin{bmatrix}
\frac{1}{ \sigma} \I_L &  -\bA  \\
-\bA^\top   & \frac{1}{ \tau} \I_N 
\end{bmatrix} }_\cQ \bigg)^{-1}  
\underbrace{   \begin{bmatrix}   \frac{1}{ \sigma} \I_L &  -\bA  \\
-\bA^\top   & \frac{1}{ \tau} \I_N 
\end{bmatrix} }_\cQ  \begin{bmatrix}  
 \bs^k \\   \bx^{k-1}
\end{bmatrix}
\]
\end{example}

\subsection{Other examples}
\label{sec_other}
Other classes of algorithms can also be viewed as the applications of  metric resolvent.   Let us now consider a typical  optimization problem with a linear equality constraint:
\[
\min_\bx h(\bx),\qquad \text{s.t.\ } \bA\bx = \bc 
\] 
where $h\in \Gamma(\cH): \R^N \mapsto \ ]-\infty,+\infty]$, $\bA: \R^N \mapsto \R^M$. It can be solved by the following examples.

\begin{example} [Basic Augmented Lagrangian Method (ALM)]
The ALM   is (see \cite[Eq.(1.2)]{mafeng_2018} and \cite[Eq.(7.2)]{taomin_2018} for example):
\be \label{alm}
\left\lfloor \begin{array}{lll}
\bx^{k+1}   & := &    \arg \min_\bx  h(\bx) +  \frac{\tau}{2}
\big\| \bA\bx - \bc -  \frac{1}{\tau} \bs^k  \big\|^2   \\
\bs^{k+1}  & :=  &  \bs^k - \tau  ( \bA \bx^{k+1} - \bc ) 
\end{array} \right. 
\ee
which can be expressed in terms of metric resolvent:
\[ 
\bb^k = \begin{bmatrix}  \bx^k    \\ \bs^k    \end{bmatrix}; \quad
\cA: \bb \mapsto  \begin{bmatrix}
 \partial h  &  -\bA^\top  \\  \bA &  \bf 0
    \end{bmatrix}  \bb - \begin{bmatrix}
\bf 0 \\ \bc   \end{bmatrix}; \quad 
\cQ =    \begin{bmatrix}
\bf 0    & \bf 0   \\  \bf 0  & \frac{1}{\tau} \I_M  
\end{bmatrix} 
\]
\end{example}

\begin{example} [Linearized ALM]
The linearlized ALM is given as \cite{yang_yuan_2013}: 
\be \label{lalm}
\left\lfloor \begin{array}{lll}
\bx^{k+1}   & := &  \arg \min_\bx   h(\bx) + \frac{\rho } {2} 
\big\| \bx - \bx^k +  \frac{1}{ \rho}  \bA^\top \big( \tau  
(\bA \bx^k -\bc) -   \bs^k \big) \big\|^2  \\
\bs^{k+1}  & :=  &  \bs^k - \tau  
( \bA \bx^{k+1} - \bc) 
\end{array} \right. 
\ee
which can be written in a metric resolvent form:
\[ 
\bb^k = \begin{bmatrix}  \bx^k    \\ \bs^k    \end{bmatrix}; \quad
\cA: \bb \mapsto  \begin{bmatrix}
 \partial h  &  -\bA^\top  \\  \bA &  \bf 0
    \end{bmatrix}  \bb - \begin{bmatrix}
\bf 0 \\ \bc   \end{bmatrix}; \quad 
\cQ =    \begin{bmatrix}
\rho \I_N  -  \tau \bA^\top  \bA    & \bf 0   \\
\bf 0  & \frac{1}{\tau} \I_M  \end{bmatrix} 
\]
\end{example}

\begin{example} [Linearized Bregman algorithm  \cite{cjf_2}]
The scheme reads as (see also \cite[Eq.(1.11)]{zxq}):
\be \label{lb}
\left\lfloor \begin{array}{lll}
\bx^{k+1}   & := &  \arg \min_\bx \rho \tau  h(\bx) + 
\frac{1}{2} \big\| \bx - \rho \bA^\top  \bs^k  \big\|^2   \\
\bs^{k+1} & :=  &  \bs^k  - ( \bA\bx^{k+1} - \bc)
\end{array} \right. 
\ee
with the metric resolvent form:
\[ 
\bb^k = \begin{bmatrix}  \bx^k    \\ \bs^k    \end{bmatrix}; \quad
\cA: \bb \mapsto  \begin{bmatrix}
  \tau  \partial h + \frac{1}{\rho} \I_N - \bA^\top \bA
    &  -\bA^\top  \\  \bA &  \bf 0
    \end{bmatrix}  \bb + \begin{bmatrix}
 \bA^\top \bc \\ -\bc   \end{bmatrix}; \quad 
\cQ =    \begin{bmatrix}
\bf 0    & \bf 0   \\  \bf 0  &  \I_M  
\end{bmatrix} 
\]
\end{example}

Before ending this section, we note that more    existing algorithms, besides from the listed ones, can be reexpressed by the (generalized) metric resolvent.  We do not not enumerate all the supporting evidences here, and refer interested readers to \cite{mafeng_2018,fang_2015,
cch_2016,hbs_yxm_2018,deng_2017,bai_2018} for more examples.

\section{Conclusions}
In this paper, we investigated the nonexpansiveness of the metric resolvent and its generalized (typically, relaxed) version, from which immediately followed the convergence properties of the associated fixed-point iterations, in terms of both solution distance and objective value. In particular, it is shown that many classes of operator splitting methods can be expressed by the (generalized) metric resolvent.  This study provides a unified understanding and treatment of these  algorithms. Last, it seems interesting to analyze the accelerated algorithms under the paradigm of metric resolvent, which will be left to future work.

\bibliographystyle{siamplain} 
\bibliography{refs}

\begin{thebibliography}{10}

\bibitem{bai_2018}
{\sc J.~Bai, H.~Zhang, and J.~Li}, {\em A parameterized proximal point
  algorithm for separable convex optimization}, Optim. Lett., 12 (2018),
  pp.~1589--1608.

\bibitem{plc_book}
{\sc H.~H. Bauschke and P.~L. Combettes}, {\em Convex Analysis and Monotone
  Operator Theory in Hilbert Spaces}, CMS Books in Mathematics, Springer, New
  York, NY, 2011.

\bibitem{res_7}
{\sc H.~H. Bauschke and W.~M. Moursi}, {\em The magnitude of the minimal
  displacement vector for compositions and convex combinations of firmly
  nonexpansive mappings}, Optimization Letters, 12 (2018), pp.~1465--1474.

\bibitem{beck_book}
{\sc A.~Beck}, {\em First-Order Methods in Optimization}, SIAM-Society for
  Industrial and Applied Mathematics, 2017.

\bibitem{beck_unified}
{\sc A.~Beck and M.~Teboulle}, {\em Smoothing and first order methods: A
  unified framework}, SIAM Journal on Optimization, 22 (2012), pp.~557--580.

\bibitem{bert_book}
{\sc D.~P. Bertsekas}, {\em Convex Optimization Theory}, Athena Scientific, 1st
  edition, 2009.

\bibitem{bot_2015}
{\sc R.~Bo\c{t}, E.~Csetnek, A.~Heinrich, and C.~Hendrich}, {\em On the
  convergence rate improvement of a primal-dual splitting algorithm for solving
  monotone inclusion problems}, Math. Program., Ser. A, 150 (2015),
  pp.~251--279.

\bibitem{bonnans}
{\sc J.~Bonnans, J.~Gilbert, and S.-C. Lemar\'{e}chal, C.}, {\em A family of
  variable metric proximal methods}, Math. Program., 68 (1995), pp.~15--47.

\bibitem{boyd_admm}
{\sc S.~Boyd, N.~Parikh, E.~Chu, B.~Peleato, and J.~Eckstein}, {\em Distributed
  optimization and statistical learning via the alternating direction method of
  multipliers}, Foundations and Trends in Machine Learning, 3 (2011),
  pp.~1--122.

\bibitem{bredies_2017}
{\sc K.~Bredies and H.~Sun}, {\em A proximal point analysis of the
  preconditioned alternating direction method of multipliers}, J. Optim. Theory
  Appl., 173 (2017), pp.~878--907.

\bibitem{qian}
{\sc J.~V. Burke and M.~Qian}, {\em A variable metric proximal point algorithm
  for monotone operators}, SIAM Journal on Control and Optimization, 37 (1999),
  pp.~353--375.

\bibitem{cjf_2}
{\sc J.~Cai, S.~Osher, and Z.~Shen}, {\em Convergence of the linearized
  {B}regman iteration for $\ell_1$-norm minimization}, Mathematics of
  Computation, 78 (2009), pp.~2127--2136.

\bibitem{cjf_1}
{\sc J.~Cai, S.~Osher, and Z.~Shen}, {\em Linearized {B}regman iterations for
  compressed sensing}, Mathematics of Computation, 78 (2009), pp.~1515--1536.

\bibitem{cp_2011}
{\sc A.~Chambolle and T.~Pock}, {\em A first-order primal-dual algorithm for
  convex problems with applications to imaging}, J. Math. Imag. Vis., 40
  (2011), pp.~120--145.

\bibitem{cch_2016}
{\sc C.~Chen, B.~He, and X.~Yuan}, {\em The direct extension of {ADMM} for
  multi-block convex minimization problems is not necessarily convergent},
  Math. Program., Ser. A, 155 (2016), pp.~57--79.

\bibitem{pesquet_2016}
{\sc E.~Chouzenoux, J.~Pesquet, and A.~Repetti}, {\em A block coordinate
  variable metric forward- backward algorithm}, Journal of Global Optimization,
  66 (2016), pp.~457--485.

\bibitem{plc_vu}
{\sc P.~Combettes and B.~V\~{u}}, {\em Variable metric quasi-{F}ej\'{e}r
  monotonicity}, Nonlinear Analysis: Theory, Methods \& Applications, 78
  (2016), pp.~17--31.

\bibitem{plc}
{\sc P.~Combettes and V.~Wajs}, {\em Signal recovery by proximal
  forward-backward splitting}, Multiscale Modeling and Simulation, 4 (2005),
  pp.~1168--1200.

\bibitem{ywt_2017}
{\sc D.~Davis and W.~Yin}, {\em A three-operator splitting scheme and its
  optimization applications}, Set-Valued and Variational Analysis, 25 (2017),
  pp.~829--858.

\bibitem{deng_2017}
{\sc W.~Deng, M.~Lai, Z.~Peng, and W.~Yin}, {\em Parallel multi-block {ADMM}
  with $\mathcal{O}(1/k)$ convergence}, J. Sci. Comput., 71 (2017),
  pp.~712--736.

\bibitem{drs}
{\sc J.~Douglas and H.~Rachford}, {\em On the numerical solution of heat
  conduction problems in two and three space variables}, Trans. Amer. Math.
  Soc., 82 (1956), pp.~421--439.

\bibitem{res_1}
{\sc G.~Z. Eskandani and M.~Raeisi}, {\em A new algorithm for finding fixed
  points of bregman quasi-nonexpansive mappings and zeros of maximal monotone
  operators by using products of resolvents}, Results in Mathematics, 71
  (2017), pp.~1307--1326.

\bibitem{esser}
{\sc E.~Esser, X.~Zhang, and T.~Chan}, {\em A general framework for a class of
  first order primal-dual algorithms for convex optimization in imaging
  science}, SIAM Journal on Imaging Sciences, 3 (2010), pp.~1015--1046.

\bibitem{fang_2015}
{\sc E.~X. Fang, B.~He, H.~Liu, and X.~Yuan}, {\em Generalized alternating
  direction method of multipliers: new theoretical insights and applications},
  Math. Program. Comput., 7 (2015), pp.~149--187.

\bibitem{boyd_2014}
{\sc P.~Giselsson and S.~Boyd}, {\em Diagonal scaling in {D}ouglas-{R}achford
  splitting and {ADMM}}, in 53rd IEEE Conf. on Decision and Control, LA,
  California, USA, 2014, pp.~5033--5039.

\bibitem{boyd_control}
{\sc P.~Giselsson and S.~Boyd}, {\em Linear convergence and metric selection
  for douglas-rachford splitting and admm}, IEEE Transactions on Automatic
  Control, 62 (2017), pp.~532--544.

\bibitem{glowinski_2}
{\sc R.~Glowinski}, {\em Numerical Methods for Nonlinear Variational Problems},
  Springer, New York, 1984.

\bibitem{glowinski_1}
{\sc R.~Glowinski and A.~Marrocco}, {\em Sur l'approximation par
  \'{e}l\'{e}ments finis d'ordure un et la r\'{e}solution par
  p\'{e}nalisation-dualit\'{e} d'une classe de probl\`{e}mes de dirichlet non
  lin\'{e}aires}, Revue Fr. Autom. Inf. Rech. Op\'{e}r. Anal. Num\'{e}r., 2
  (1975), pp.~41--76.

\bibitem{sb}
{\sc T.~Goldstein and S.~Osher}, {\em The split {B}regman method for
  $\ell_1$-regularized problems}, SIAM J. Imaging Sceiences, 2 (2009),
  pp.~323--343.

\bibitem{hbs_prs}
{\sc B.~He, H.~Liu, Z.~Wang, and X.~Yuan}, {\em A strictly contractive
  {Peaceman}--{Rachford} splitting method for convex programming}, SIAM Journal
  on Optimization, 24 (2014), pp.~1011--1040.

\bibitem{hbs_jmiv_2017}
{\sc B.~He, F.~Ma, and X.~Yuan}, {\em An algorithmic framework of generalized
  primal-dual hybrid gradient methods for saddle point problems}, Journal of
  Mathematical Imaging and Vision, 58 (2017), pp.~279--293.

\bibitem{hbs_siam_2012_2}
{\sc B.~He and X.~Yuan}, {\em Convergence analysis of primal-dual algorithms
  for a saddle-point problem: From contraction perspective}, SIAM J. Imaging
  Sciences, 5 (2012), pp.~119--149.

\bibitem{hbs_siam_2012}
{\sc B.~He and X.~Yuan}, {\em On the $\mathcal{O}(1/n)$ convergence rate of the
  {D}ouglas-{R}achford alternating direction method}, SIAM J. Numerical
  Analysis, 50 (2012), pp.~700--709.

\bibitem{hbs_yxm_2015}
{\sc B.~He and X.~Yuan}, {\em On non-ergodic convergence rate of
  {Douglas}--{Rachford} alternating direction method of multipliers},
  Numerische Mathematik, 130 (2015), pp.~567--577.

\bibitem{hbs_2015}
{\sc B.~He and X.~Yuan}, {\em On the convergence rate of {D}ouglas--{R}achford
  operator splitting method}, Mathematical Programming, 153 (2015),
  pp.~715--722.

\bibitem{hbs_yxm_2018}
{\sc B.~He and X.~Yuan}, {\em A class of {ADMM}-based algorithms for
  three-block separable convex programming}, Comput. Optim. Appl., 70 (2018),
  pp.~791--826.

\bibitem{unified_ieee}
{\sc D.~Jakoveti\'{c}}, {\em A unification and generalization of exact
  distributed first-order methods}, IEEE Transactions on Signal and Information
  Processing over Networks, 5 (2019), pp.~31--46.

\bibitem{latafat}
{\sc P.~Latafat and P.~Patrinos}, {\em Primal-Dual Proximal Algorithms for
  Structured Convex Optimization: A Unifying Framework}, Springer, Cham, 2018,
  pp.~97---120.

\bibitem{lxd_2016}
{\sc X.~Li, D.~Sun, and K.-C. Toh}, {\em A {S}chur complement based
  semi-proximal {ADMM} for convex quadratic conic programming and extensions},
  Mathematical Programming, 155 (2016), pp.~333--373.

\bibitem{ljw_mapr}
{\sc J.~Liang, J.~Fadili, and G.~Peyr\'{e}}, {\em Convergence rates with
  inexact non-expansive operators}, Mathematical Programming, 159 (2016),
  pp.~403--434.

\bibitem{mafeng_2018}
{\sc F.~Ma and M.~Ni}, {\em A class of customized proximal point algorithms for
  linearly constrained convex optimization}, Comp. Appl. Math., 37 (2018),
  pp.~896--911.

\bibitem{martinet}
{\sc B.~Martinet}, {\em R\'{e}gularisation d'in\'{e}quations variationnelles
  par approximations successives}, Rev. Fr. Informatique et Recherche
  Op\'{e}rationnelle, 4 (1970), pp.~154--158.

\bibitem{opial}
{\sc Z.~Opial}, {\em Weak convergence of the sequence of successive
  approximations for nonexpansive mappings}, Bull. Amer. Math. Soc., 73 (1967),
  pp.~591--597.

\bibitem{osher_2005}
{\sc S.~Osher, M.~Burger, D.~Goldfarb, J.~Xu, and W.~Yin}, {\em An iterative
  regularization method for total variation-based image restoration},
  Multiscale Modeling \& Simulation, 4 (2005), pp.~460--489.

\bibitem{lotito}
{\sc L.~A. Parente, P.~A. Lotito, and M.~V. Solodov}, {\em A class of inexact
  variable metric proximal point algorithms}, SIAM Journal on Optimization, 19
  (2008), pp.~240--260.

\bibitem{boyd_prox}
{\sc N.~Parikh and S.~Boyd}, {\em Proximal algorithms}, Foundations and Trends
  in Optimization, 1 (2014), pp.~123--231.

\bibitem{passty}
{\sc G.~Passty}, {\em Ergodic convergence to a zero of the sum of monotone
  operators in {H}ilbert space}, Journal of Mathematical Analysis and
  Applications, 72 (1979), pp.~383--390.

\bibitem{pfbs_siam}
{\sc H.~Raguet, J.~Fadili, and G.~Peyr\'{e}}, {\em A generalized
  forward-backward splitting}, SIAM J. Imaging Sci., 6 (2013), pp.~1199--1226.

\bibitem{rtr_1976}
{\sc R.~Rochafellar}, {\em Monotone operators and the proximal point
  algorithm}, SIAM J. Control Optim., 14 (1976), pp.~877--898.

\bibitem{rtr_book}
{\sc R.~T. Rockafellar}, {\em Convex analysis}, Princeton Landmarks in
  Mathematics and Physics, Princeton University Press, 1996.

\bibitem{rtr_book_2}
{\sc R.~T. Rockafellar and R.~J.-B. Wets}, {\em Variational Analysis},
  Springer, Grundlehren der Mathematischen Wissenschaft, vol. 317, 2004.

\bibitem{admm_tmi}
{\sc A.~Sawatzky, Q.~Xu, C.~O. Schirra, and M.~A. Anastasio}, {\em Proximal
  {ADMM} for multi-channel image reconstruction in spectral {X}-ray {CT}}, IEEE
  Transactions on Medical Imaging, 33 (2014), pp.~1657--1668.

\bibitem{taomin_2018}
{\sc M.~Tao and X.~Yuan}, {\em On the optimal linear convergence rate of a
  generalized proximal point algorithm}, J. Sci. Comput., 74 (2018),
  pp.~826--850.

\bibitem{teboulle_2018}
{\sc M.~Teboulle}, {\em A simplified view of first order methods for
  optimization}, Math. Program., Ser. B, 170 (2018), pp.~67--96.

\bibitem{fxue_1}
{\sc F.~Xue}, {\em Note on the nonexpansive operators based on arbitrary
  variable metric}, arXiv preprint: arXiv:2108.03352,  (2021).

\bibitem{yang_yuan_2013}
{\sc J.~Yang and X.~Yuan}, {\em Linearized augmented {L}agrangian and
  alternating direction methods for nuclear norm minimization}, Mathematics of
  Computation, 82 (2013), pp.~301--329.

\bibitem{yin_2008}
{\sc W.~Yin, S.~Osher, D.~Goldfarb, and J.~Darbon}, {\em Bregman iterative
  algorithms for $\ell_1$-minimization with applications to compressed
  sensing}, SIAM J. Imaging Sceiences, 1 (2008), pp.~143--168.

\bibitem{zxq}
{\sc X.~Zhang, M.~Burger, and S.~Osher}, {\em A unified primal-dual algorithm
  framework based on {B}regman iteration}, Journal of Scientific Computing, 46
  (2011), pp.~20--46.

\bibitem{pdhg}
{\sc M.~Zhu and T.~Chan}, {\em An efficient primal-dual hybrid gradient
  algorithm for total variation image restoration}, \rm CAM Report 08-34, UCLA,
   (2008).

\end{thebibliography}

%\newpage
%\appendix
%
%\section{Key technical lemmas}
%
%For completeness, we firstly give proof of Three point property. 
%
%\subsection{Proof of the three point property}
%
%Note that $\phi(x)+D_h(x,z)$ is differentiable and convex in $x$. Using the definition of $z_+$ we have

\end{document}